\documentclass[reqno]{amsart}

\usepackage{amssymb,latexsym,mathrsfs,
	amsxtra,amscd,amsfonts,amsthm,amsmath,verbatim,epsfig}

\usepackage{color}

\usepackage[colorlinks,pagebackref=true]{hyperref}

\usepackage[english]{babel}

\newcommand{\arxiv}[1]{\href{http://arxiv.org/abs/#1}{{\tt arXiv:#1}}}

\makeatletter

% \@addtoreset{equation}{section}
% \def\theequation{\thesection.\@arabic \c@equation}
\makeatletter
\def\widebreve#1{\mathop{\vbox{\m@th\ialign{##\crcr\noalign{\kern3\p@}%
				\brevefill\crcr\noalign{\kern3\p@\nointerlineskip}%
				$\hfil\displaystyle{#1}\hfil$\crcr}}}\limits}
\def\brevefill{$\m@th \setbox\z@\hbox{$\braceld$}%
	\bracelu\leaders\vrule \@height\ht\z@ \@depth\z@\hfill\braceru$}

\makeatletter

\def\@citecolor{blue}
\def\@linkcolor{red}
\def\@urlcolor{blue}
\def\@urlcolor{blue}

\numberwithin{equation}{section}
\def\im{\operatorname{im}}

\def\ker{\operatorname{ker}}
\def\rk{\operatorname{rank}}
\def\syz{\operatorname{Syz}}
\def\tor{\operatorname{Tor}}
\def\soc{\operatorname{Soc}}
\def\dim{\operatorname{dim}}
\def\depth{\operatorname{depth}}

\def\pd{\operatorname{pd}}
\def\grade{\operatorname{grade}}
\def\ass{\operatorname{Ass}}

\def\ann{\operatorname{Ann}}
\def\spec{\operatorname{Spec}}

\newcommand{\mm}{\mathfrak m}

\newcommand{\lm}{{\lambda}}

\newcommand{\bl}{\begin{lemma}}
	\newcommand{\el}{\end{lemma}}
\newcommand{\bt}{\begin{theorem}}
	\newcommand{\et}{\end{theorem}}
\newcommand{\ben}{\begin{enumerate}}
	\newcommand{\een}{\end{enumerate}}
\newcommand{\bpf}{\begin{proof}}
	\newcommand{\eepf}{\end{proof}}
\newcommand{\beqn}{\begin{eqnarray*}}
	\newcommand{\eeqn}{\end{eqnarray*}}
\newcommand{\beqnn}{\begin{eqnarray}}
\newcommand{\eeqnn}{\end{eqnarray}}
\newcommand{\bd}{\begin{definition}}
	\newcommand{\ed}{\end{definition}}
\newcommand{\bp}{\begin{proposition}}
	\newcommand{\ep}{\end{proposition}}
\newcommand{\bc}{\begin{corollary}}
	\newcommand{\ec}{\end{corollary}}
\newcommand{\bex}{\begin{example}}
	\newcommand{\eex}{\end{example}}

\newcommand{\wlg}{ Without loss of generality }

\theoremstyle{plain}
\newtheorem{theorem}{Theorem}[section]
\newtheorem{corollary}[theorem]{Corollary}
\newtheorem{proposition}[theorem]{Proposition}
\newtheorem{lemma}[theorem]{Lemma}

\newtheorem{example}[theorem]{Example}
\newtheorem{definition}[theorem]{Definition}

\newtheorem{question}[theorem]{Question}

\theoremstyle{remark}
\newtheorem{remark}[theorem]{Remark}
\numberwithin{equation}{theorem}
%\usepackage{showkeys}

%------    GENERAL MACROS    -----
               % the font for N,Z,Q,R,C

%------------------------------------------------
% Direct and inverse limits
%
%\opn\dirlim{\underrightarrow{\lim}}
%\opn\inivlim{\underleftarrow{\lim}}
%
%
% Names with a meaning
%
%\let\union=\cup
%\let\sect=\cap
%\let\dirsum=\oplus
%\let\tensor=\otimes
%\let\iso=\cong
%\let\Union=\bigcup
%\let\Sect=\bigcap
%\let\Dirsum=\bigoplus
%\let\Tensor=\bigotimes

%
% We like the var forms of some greek letters (as taught in German schools)
%
%\let\epsilon\varepsilon
%\let\phi=\varphi
%\let\kappa=\varkappa
%
%\def \a {\alpha}
%\def \s {\sigma}
%\def \d {\delta}
%\def \g {\gamma}
\def\im{\operatorname{im}}
\def\ker{\operatorname{ker}}
%\newcommand{\depth}{depth}

%
%           We print on A4 paper
%
\textwidth=16cm \textheight=22cm \topmargin=0.5cm
\oddsidemargin=0.5cm \evensidemargin=0.5cm \pagestyle{plain}
% ------    END OF GENERAL MACROS    -------
\begin{document}
\title{Frobenius Betti numbers and syzygies of finite length modules}
\author{Ian M. Aberbach \and Parangama Sarkar}
\address{Ian M. Aberbach, Department of Mathematics,
	University of Missouri, Columbia, MO 65211, USA}
\email{aberbachi@missouri.edu}

\address{Parangama Sarkar, Department of Mathematics,
	University of Missouri, Columbia, MO 65211, USA}
\email{parangamasarkar@gmail.com}
\subjclass{Primary: 13D02; Secondary: 13A35, 13H99}
\keywords{Finite length syzygies, Frobenius Betti numbers, projective dimension, phantom homology, local cohomology}

%\thanks{The second author was supported by IUSSTF, SERB Indo-U.S. Postdoctoral Fellowship 2017/145 and DST-INSPIRE.}

\date{\today}

\begin{abstract}
Let $(R,\mm)$ be a local (Noetherian) ring of dimension $d$ and $M$ a finite length $R$-module with free resolution $G_\bullet$.  De Stefani, Huneke, and N\'{u}\~{n}ez-Betancourt explored two questions about the properties of resolutions of $M$.    First, in  characteristic $p>0$, what vanishing conditions on the Frobenius Betti numbers, $\beta_i^F(M, R) : = \lim_{e \to \infty} \lambda(H_i(F^e(G_\bullet)))/p^{ed}$,  force $\pd_R M < \infty$.  Second, if $\pd_R M = \infty $, does  this force $d+2$nd or higher syzygies of $M$ to have infinite length.

For the first question, they showed, under rather restrictive hypotheses, that $d+1$ consecutive vanishing Frobenius Betti numbers forces $\pd_R M < \infty$.  And when $d=1$ and $R$ is CM then one vanishing Frobenius Betti number suffices.  Using properties of stably phantom homology, we show that these results hold in general, i.e., $d+1$ consecutive vanishing Frobenius Betti numbers force $\pd_R M < \infty$, and, under the hypothesis that $R$ is CM, $d$ consecutive vanishing Frobenius Betti numbers suffice.

For the second question, they obtain very interesting results when $d=1$.  In particular, no third syzygy of $M$ can have finite length.  Their main tool  is, if $d=1$, to show, if the syzygy has a finite length, then it is an alternating sum of lengths of Tors.   We are able to prove this fact for rings of arbitrary dimension, which allows us to show that if $d=2$, no third syzygy of $M$ can be finite length!  We also are able to show that the question has a positive answer if the dimension of the socle of $H^0_{\mm}(R)$ is large relative to the rest of the module, generalizing the case of Buchsbaum rings.
\end{abstract}

\maketitle

\section{Introduction}
Let $(R, \mm, k)$ be a commutative local (so Noetherian) ring.  When $M$ is an $R$-module of finite length, we may consider various hypotheses about the syzygies of $M$, or of related modules, and ask if these hypotheses suffice to show that the projective dimension of $M$ is finite.  In particular, we are motivated by two questions asked by De Stefani, Huneke and N\'{u}\~{n}ez-Betancourt in \cite{SHB}.

The first question is in the context of local rings of positive prime characteristic $p$, where we have the Frobenius endomorphism $f: R \to R$ sending $r \mapsto r^p$, and its iterates $f^e:R \to R$ for $e \ge 0$.  Let $\lambda(M)$ denote the length of the module $M$.  If $\lambda(M) < \infty$, and $G_\bullet$ is a resolution of $M$ by finitely generated free modules, then $F^e(G_\bullet)$ (see section 2 for background on the Frobenius functor) has finite length homology for all all $e\ge 0$.  If $\dim(R) =d$, we may consider the Frobenius Betti numbers 
$$\beta_i^F(M,R) = \lim\limits_{e \to \infty} \dfrac{\lambda(H_{i}(F^e(G_\bullet)))} {p^{ed}},
$$
 which are non-negative real numbers.  Having the extremal value of $0$ for some positive $i$ should suggest that $M$ is particularly well-behaved.  Indeed, Miller showed in \cite[Corollary 2.5]{Mi} that if $R$ is a complete intersection then the existence of even one $\beta^F_i(M,R) = 0$ implies that $\pd_R M < \infty$.  

De Stefani, Huneke, and N\'{u}\~{n}ez-Betancourt posed the question \cite{SHB}:
\begin{question}\label{1st question}
Let $M$ be an $R$-module of finite length.  What vanishing conditions on $\beta_i^F(M,R)$ imply that $M$ has finite projective dimension?
\end{question}
Technically, they asked the question only in the case that $R$ is F-finite (see section 2), but, in the manner $\beta_i^F$ is defined here, that hypothesis is not necessary.

They then showed that if $R$ has a finitely generated regular algebra of the same dimension and if $d+1$ consecutive $\beta_i^F(M,R)$ (with $i$ positive) vanish, then $\pd_R M < \infty$ (see \cite[Proposition 4.1]{SHB}).  Moreover, if $\dim R =1$ and $R$ is Cohen-Macaulay, then a single vanishing $\beta_i^F(M,R)$ (for $i >0$) suffices (\cite[Corollary 4.8]{SHB}).

We are able to generalize both of these results.  The former result is true without any hypotheses on the ring $R$, while the latter result is true in any arbitrary positive dimension provided only that $R$ have positive depth and is formally equidimensional (a substantially weaker hypothesis than being CM).  See Corollary~\ref{general} and Corollary~\ref{result}.

The second way in which a module $M$ of finite length may be ``close to'' being finite projective dimension is if $M$ has a syzygy of finite length.  For any local ring $(R, \mm)$ of dimension one and depth zero, if we take a parameter $x \in \mm$ then $M= R/xR$ is a module which is not of finite projective dimension, but which has a second syzygy of finite length.  Thus, one should ask about the possibility of syzygies of finite length at the $\dim(R)+2$ spot or higher (\cite{A} shows that the $i$th syzygies for $0 < i \leq \dim(R)$ syzygies are not finite length).  In light of these facts, De Stefani, Huneke, and N\'{u}\~{n}ez-Betancourt posed the question \cite{SHB}:

\begin{question}\label{2nd question}
Let $R$ be a $d$-dimensional local ring, and let $M$ be a finitely generated $R$-module such that $\pd_R(M) = \infty$ and $\lambda(M) < \infty$.  If $i > d+1$, then must the length of the $i$th syzygy be infinite?
\end{question}

There are very few satisfying results in this direction, although in dimension one it is shown in \cite{SHB} that if a module of finite length has an $i$th syzygy, $\Omega_i$, of finite length, then $\lambda(\Omega_i)$ may be computed as an alternating sum of lengths of certain Tor modules.  In this way, they are able to show that no such $\Omega_3$ may be finite length.  We give a simpler proof of their result on the length of such an $\Omega_i$, which holds in all dimensions (see Proposition~\ref{extended}).  

There are several interesting results that follow from Proposition~\ref{extended}.  We show, in Theorem~\ref{dimension two}, that in a ring of dimension two, no {\it third} syzygy of a finite length module can have finite length, suggesting that for $d \ge 2$, Question~\ref{2nd question} may have a positive answer for $i \ge d+1$ (not just for $i > d+1$).  We also show that one form of bad behavior with respect to Question~\ref{2nd question} forces good behavior in other cases with respect to Question~\ref{2nd question}.  Specifically, in Theorem~\ref{bad to good}, we show that if $H^0_\mm(R)$ has an $i-2$nd syzygy of finite length for some $i \ge 4$, then $\syz_{i+1}(M)$, where $M$ has finite length, cannot have finite length.

We also show that Question~\ref{2nd question} has a positive answer for rings in which the socle dimension of $H^0_\mm(R)$ is large relative to the total length of $H^0_\mm(R)$, generalizing the case of Buchsbaum rings done in \cite{SHB}.  See Theorem~\ref{big socle} for the precise statement.
\subsection*{Acknowledgements.}
The second author would like to express her sincere gratitude to Olgur Celikbas for suggesting the paper \cite{SHB}. The authors would like to thank Alessandro De Stefani for interesting and useful comments. The second author was supported by IUSSTF, SERB Indo-U.S. Postdoctoral Fellowship 2017/145 and DST-INSPIRE India.
\section{Brief tight closure background}

%\subsection {Tight Closure}
We will briefly outline what we need with regard to tight closure below and  refer the interested reader to the sources \cite{HH}, and \cite{HH2} for more information.

Let $R$ be a Noetherian ring.  In section~\ref{FBnumber} we will be dealing exclusively with rings of positive prime characteristic $p$, in which case we have the Frobenius endomorphism $f:R \to R$ sending $r$ to $r^p$ and its iterates.  It is often helpful to denote the target by ${}^1\! R$.  Similarly, we have iterates
$f^e: R \to {}^e\! R = R$.  For each $e$ there is a functor from the category of $R$-modules to itself obtained by tensoring with ${}^e\!R$ and then identifying ${}^e\!R$ with $R$.  We denote this functor by $F^e$.  In particular $F^e(M) := {}^e \!R \otimes_R M$, where we have  $r \otimes am = a^qr \otimes m$ and $b(r \otimes m) = (br)\otimes M$.  

Using $q = p^e$, for an ideal $I \subseteq R$, we set $I^{[q]} = (a^q : a \in I)$, and we observe that $I^{[q]}$ is $I \,{}^e\! R$ so that $F^e(R/I) = R/I^{[q]}$.  More generally, for $N \subseteq M$ we set $N^{[q]}_M = \im(F^e(N) \to F^e(M))$.   We also use the notation that for $m \in M$, $m^q = 1 \otimes m \in F^e(M)$.   By the right exactness of tensor, if a module has presentation $R^{b_1} \overset {\phi} \to R^{b_0}$ and $\phi$ can be represented by the $b_0 \times b_1$ matrix $[a_{ij}]$, then $F^e(M)$ is presented by the matrix $[a^q_{ij}]$.  We will be particularly concerned with left complexes of finitely generated free modules, $(G_\bullet, \phi_\bullet)$, in which case, applying the Frobenius functor gives $(F^e(G_\bullet), F^e(\phi_\bullet))$, which is the left complex with free modules of the same rank, and matrices with entries in each position raised to the $q$th power.

 We let $R^o$ be the complement of the minimal primes of $R$ (e.g., $R -\{0\}$ in the case that $R$ is a domain).  Given modules $N \subseteq M$ we say that the element $m \in M$ is in the tight closure of $N$ in $M$, denoted, $N^*_M$, if there exists a $c \in R^o$ such that for all $q \gg 0$, $cm^q \in N^{[q]}_M$.  This definition is particularly easy to understand in the case that $M = R^t$ is a free module.  Then $N$ is generated by a set of $t \times 1$ vectors, say $\mathbf{n}_1, \ldots \mathbf{n}_h$ and each $\mathbf{n}_j^q$ is the vector with components raised to the $q$th power.  
 
 Let $(G_\bullet, \phi_\bullet)$ be a complex of $R$-modules (in our case $G_\bullet$ will consist of finitely generated free modules, but this is not needed for the definition). We say that the complex has {\it phantom homology at the $i$th spot} if $\ker(\phi_i) \subseteq (\im(\phi_{i+1}))^*_{G_i}$, and the complex has {\it stably phantom homology at the $i$th spot} if for all $e \ge 0$, $F^e(G_\bullet)$ has phantom homology at the $i$th spot.  If $G_\bullet$ is a left complex (i.e., $G_i = 0$ for $i < 0$), then we say that $G_\bullet$ is stably phantom acyclic if $G_\bullet$ has stably phantom homology at the $i$th spot for all $i >0$.  (Note that a free resolution of a module has phantom homology at the $i$th spot for all $i>0$, but, since the Frobenius endomorphism is not usually exact, it is very rare for such a complex to be stably phantom acyclic.  Much of the work in section \ref{FBnumber} depends on proving that enough consecutive stably phantom homologies implies that a minimal free resolution  is a finite resolution).  
 
 When testing tight closure, in principle the element $c \in R^o$ can change, but it is often the case that all tight closure tests can be done with one element.  If we know that $c \in R^o$ works for all tight closure tests then we call $c$ a {\it test element}.  The theory of test elements is extremely interesting, but is not addressed in this paper.  For our purposes it suffices to know the main theorem of \cite{HH2} on test elements: Let $R$ be essentially of finite type over an excellent local ring and reduced (e.g., complete and reduced).  If $d \in R^o$ is such that $R_d$ is regular (such elements always exist in this case), then $d$ has a power which is a test element.

We say that $R$ is   {\it F-finite} if ${}^1\!R$ is finitely generated as an $R$-module, in which case all ${}^e\! R$ are finitely generated.  More generally, if $M$ is an $R$-module then we can consider the module ${}^e\!M$ via restriction of scalars with respect to $f^e$ (this is true whether or not $R$ is F-finite).  When $R$ is F-finite and $M$ is finitely generated, then all ${}^e\! M$ are also finitely generated.  

\begin{definition}
(see section 3 of \cite{SHB}) Let $(R, \mm, k)$ be an $F$-finite local ring of characteristic $p$ and dimension $d$.  Then $\alpha := \log_p [k^{1/p}:k]$ is finite.  Let $M$ be a module of finite length and let $N$ be a finitely generated module.  For $i \ge 0$, define the
$i$th Frobenius Betti number of $N$ with respect to $M$ by
$$
\beta_i^F(M,N) = \lim_{e \to \infty} \dfrac{ \lambda(\tor_i^R(M, {}^e\!N))} {q^{d+\alpha}}.
$$
\end{definition}
We will be most interested in the case that $N = R$.  In this case, we observe that if $G_\bullet$ is a free resolution of $M$ then $\tor_i^R(M, {}^e\!R)$ can be naturally identified with $H_i(F^e(G_\bullet))$.  

We can, in fact, define Frobenius Betti numbers, for any local ring of characteristic $p$.  Let $G_\bullet$ be a resolution of the finite length module $M$ by finitely generated free modules and set
$$
\beta_i^F(M,N) = \lim_{e \to \infty} \dfrac{ \lambda(H_i(F^e(G_\bullet) \otimes_R N))} {q^{d}}.
$$
The limit exists by the results of Seibert (\cite{S}).   The complex $G_\bullet \otimes_R {}^e\! N$ can be identified with the complex $F^e_R(G_\bullet) \otimes_R N$ (since $R$ and ${}^e\!R$ are isomorphic as rings) with the length calculation adjusted by a factor of $q^{\alpha}$, i.e., $\lambda_R(H_i(G_\bullet \otimes_R {}^e\! N)) = q^{\alpha} \lambda_R(F^e(G_\bullet) \otimes_R N)$.
The advantage to this point of view is that the vanishing of a $\beta_i^F(M,R)$ is closely related to having phantom homology at the $i$th spot of $G_\bullet$, and we can use techniques from \cite{Ab}.
%While we cannot use the results of Seibert directly to show that the limit exists, we can first complete (leaving lengths unchanged by faithful flatness), and then pass to a $\Gamma$ construction  (see section 6 of  \cite{HH2}) to get to the case of a (faithfully flat) F-finite extension, where the limits do exist.  Hence, the original limits exist.   

\section{Finite length modules with enough vanishing Frobenius Betti numbers}\label{FBnumber}

Our main goal in this section is to provide an answer to Question~\ref{1st question} under the most general hypotheses on a local ring (e.g., the answer we provide may not be the best possible if we were to assume that the ring is a complete intersection).  In fact, finite projective dimension of a finite length module $M$ over a ring $(R,\mm)$ of dimension $d$  and positive depth is equivalent to having $d+1$ consecutive higher $\beta_i^F(M, R)$'s being zero.  For a full statement, see Corollary~\ref{result}.  This result is a significant generalization of \cite[Proposition 4.1]{SHB}.

\begin{remark}
Let $(G_\bullet, \phi_\bullet)$ be a left complex of finitely generated free modules over a local ring $(R, \mm)$ with all homology of finite length.  Then, on the punctured spectrum, $G_\bullet$ becomes split exact.  Since Frobenius commutes with localization this implies that all homology of $F^e(G_\bullet)$ has finite length.  We will use this fact implicitly whenever we are concerned with resolutions of finite length modules.
\end{remark}

\begin{proposition}\label{d+1}
Let $(R,\mathfrak m)$ be a $d\  (\geq 1)$-dimensional complete local ring of characteristic $p>0.$  Let $(G_\bullet,\phi_\bullet)$ be a complex of finitely generated free $R$-modules and $\phi_{j+1}(G_{j+1})\subseteq\mathfrak m G_{j}$ for all $j\geq 0.$ Suppose that for some $i \geq 1,$ $G_\bullet$ has stably phantom homologies at the $i+j$th spots for all $0\leq j\leq d.$ Then $G_{i+d}=0.$
\end{proposition}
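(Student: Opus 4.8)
The plan is to induct on $d$. The base case $d=0$ (well, actually $d \geq 1$ here, but morally the argument reduces to dimension zero) should be handled by a direct local-cohomology / length computation: if $\dim R = 0$, then $R$ is Artinian, "phantom" and "actual" containments differ by a test element multiplier that is a unit up to nilpotents, and stably phantom homology at a single spot forces the relevant map to be (eventually) a genuine split surjection, hence $G_{i+d} = 0$. Actually, let me reconsider: since $d \geq 1$ always, the real engine is an inductive step that cuts dimension by one. The key device will be to pass to $R/xR$ for a suitable parameter $x$, or better, to use the structure of stably phantom homology under such a reduction.

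Here is the approach I would actually pursue. First I would invoke the theory from \cite{Ab}: having stably phantom homology at a spot is preserved under the Frobenius functor by definition, and—this is the crucial input—there should be a result (the "phantom acyclicity criterion," an analogue of the Buchsbaum–Eisenbud acyclicity criterion) that controls ranks and heights of the ideals of minors $I_{r_j}(\phi_j)$ of a stably phantom acyclic complex. Concretely, if $G_\bullet$ has stably phantom homology at spots $i, i+1, \dots, i+d$, then writing $r_j = \sum_{\ell \geq j}(-1)^{\ell - j}\operatorname{rank} G_\ell$ for the expected ranks, one should be able to show that $\operatorname{rank}\phi_j = r_j$ for $j$ in the relevant range and that $\operatorname{ht} I_{r_j}(\phi_j)$ is as large as the phantom acyclicity criterion demands. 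Since all homology is of finite length, these ideals would need height $> d$ at the top of the complex, which is impossible in a $d$-dimensional ring unless the corresponding free module vanishes. That contradiction forces $G_{i+d} = 0$.

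The main obstacle, and where I expect to spend the most effort, is establishing the rank and height bounds for \emph{stably} phantom homology as opposed to ordinary exactness—the naive Buchsbaum–Eisenbud criterion does not apply, and one must use the characteristic-$p$ version, which requires test elements (hence the completeness hypothesis, so that $R$ reduced modulo nilpotents has a test element by the theorem of \cite{HH2} quoted in Section 2). One subtlety: $R$ need not be reduced, so one would first pass to $R_{\mathrm{red}}$, noting that length, the Frobenius functor's homology lengths, and the vanishing $G_{i+d} = 0$ are all insensitive to killing nilpotents (a power of Frobenius annihilates the nilradical), and that phantom homology over $R$ maps to phantom homology over $R_{\mathrm{red}}$. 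Then over the reduced complete ring a test element $c$ exists, and one runs the length-counting argument: for each $e$, the containment $\ker F^e(\phi_{i+d}) \subseteq (\operatorname{im} F^e(\phi_{i+d+1}))^*$ combined with finite-length homology gives, via multiplication by $c$, bounds $\lambda(H_{i+d}(F^e(G_\bullet)))$ that grow like $o(q^d)$ while a nonzero $G_{i+d}$ with the forced rank deficiency would make it grow like $q^d$ times a positive constant—forcing the conclusion. I would also want to double-check that the hypothesis $\phi_{j+1}(G_{j+1}) \subseteq \mathfrak m G_j$ (minimality) is used exactly to rule out split summands that would otherwise let a nonzero $G_{i+d}$ persist harmlessly.
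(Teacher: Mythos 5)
There is a genuine gap: both engines you propose leave the decisive step unproved. The phantom acyclicity criterion of \cite{Ab} (and of Hochster--Huneke) gives rank and height conditions for complexes that are stably phantom acyclic at \emph{all} positive spots (and are finite); here you only have stably phantom homology in a window of $d+1$ consecutive spots of a possibly infinite minimal complex, and extracting any rank/height information from such a window is precisely the content of the proposition --- you flag it as ``where I expect to spend the most effort'' but never supply an argument. Your fallback length-counting argument has two unjustified steps: first, knowing that a fixed test element $c$ kills $H_{i+d}(F^e(G_\bullet))$ for all $e$ does not by itself give $\lambda(H_{i+d}(F^e(G_\bullet)))=o(q^d)$ (a module annihilated by $c$ and by a power of $\mm$ can still be large); second, the lower bound ``a nonzero $G_{i+d}$ with the forced rank deficiency makes the homology grow like a positive constant times $q^d$'' is essentially Lemma~\ref{primes}/Theorem~\ref{important} of this paper, which is much harder, needs formally equidimensional hypotheses, and is proved \emph{after} (and partly by means of) Proposition~\ref{d+1}, so invoking it here is both unavailable and circular. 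Note also that Proposition~\ref{d+1} makes no equidimensionality assumption, so any route through minimal primes and asymptotic multiplicity-style bounds needs extra care that the sketch does not address.

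What actually works --- and what you gesture at in your opening but then abandon --- is the dimension-cutting induction itself: reduce to $R$ reduced via \cite[Lemma 9.15(b)]{HH}, take a test element $c$ for the complete reduced ring, choose a parameter $x$ so that $c,x$ is part of a system of parameters, and use the long exact sequence coming from $0\to F^e(G_\bullet)\xrightarrow{\,x\,}F^e(G_\bullet)\to F^e_S(G_\bullet\otimes S)\to 0$ with $S=R/xR$ to see that $c^2$ kills $H_{i+1+j}(F^e_S(G_\bullet\otimes S))$ for $0\le j\le d-1$; by \cite[Lemma 9.15(d)]{HH} this gives stably phantom homology over $S$ in a window of $d$ spots, and induction finishes. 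The base case $d=1$ is where the conclusion $G_{i+1}=0$ is actually forced: over the Artinian ring $S=R/xR$ (with $x\in\mm^t$, $c^2\notin\mm^t$) the entries of the Frobenius-twisted maps lie in $\mm^{[q]}$ and hence vanish for $e\gg 0$, so $c^2$ annihilates the free module $S\otimes G_{i+1}$ while $c^2\neq 0$ in $S$, forcing $G_{i+1}=0$. Without this reduction-to-Artinian mechanism (or a genuine substitute), your proposal does not close.
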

\begin{proof}
 By \cite[Lemma 9.15(b)]{HH}, $F_{R^{red}}^e(G_\bullet\otimes R^{red})$ is phantom at $i+j$th spots for all $0\leq j\leq d$ and $e\geq 0.$ Without loss of generality we may assume that $R$ is reduced and replace the complex $G_\bullet$ by $G_\bullet\otimes R^{red}.$ We use induction on $d.$ We follow the argument given in \cite[Proposition 2.1.7]{Ab}. 
 
 Let $d=1.$ Since $R$ is a complete reduced local ring, we have a test element $c\in R^o$ such that $cH_{i+j}(F_R^e(G_\bullet))=0$ for all $0\leq j\leq 1.$ Choose $t>0$ such that $c^2\notin \mathfrak m^t.$  Let $x\in R^o\cap \mathfrak m^t$ be an element and $S=R/xR.$ For all $e\geq 0,$ consider the short exact sequence of complexes 
$$0\longrightarrow F_R^e(G_\bullet)\overset{\cdot x}\longrightarrow F_R^e(G_\bullet)\longrightarrow F_S^e(G_\bullet\otimes S)\longrightarrow 0.$$
This induces long exact sequence of homology modules
$$\cdots\longrightarrow H_{k+1}(F_R^e(G_\bullet))\longrightarrow H_{k+1}(F_S^e(G_\bullet\otimes S))\longrightarrow H_k(F_R^e(G_\bullet))\longrightarrow\cdots.$$ 
For $k=i,$ we get $c^2H_{i+1}(F_S^e(G_\bullet\otimes S))=0.$ Since $S$ is Artinian, for $e\gg 0,$ the chain maps of $F_S^e(G_\bullet\otimes S)$ are trivial. Hence $c^2(S\otimes G_{i+1})=0$.  The image of $c^2 \ne 0$ in $S$, and the module $S \otimes_R G_{i+1}$ is $S$-free, which implies $G_{i+1}=0$.

Now suppose $d\geq 2$ and $c$ is a test element in $R.$ Then $cH_{i+j}(F_R^e(G_\bullet))=0$ for all $0\leq j\leq d.$ Consider an element $x\in R$ avoiding all minimal primes of $R$ such that $c,x$ is a part of system of parameter of $R.$ Let $S=R/xR.$ Note that $\overline{c}$ (image of $c$ in $S$) is in $S^o.$ Using the long exact sequence of homologies induced by the short exact sequence of complexes 
$$0\longrightarrow F_R^e(G_\bullet)\overset{\cdot x}\longrightarrow F_R^e(G_\bullet)\longrightarrow F_S^e(G_\bullet\otimes S)\longrightarrow 0,$$ we get ${\overline c}^2H_{i+1+j}(F_S^e(G_\bullet\otimes S))=0$ for all $0\leq j\leq d-1.$ By \cite[Lemma 9.15(d)]{HH}, for all $0\leq j\leq d-1$ and $e\geq 0,$ $F_S^e(G_\bullet\otimes S)$ are phantom at $i+1+j$th spots. Then by induction $(G_\bullet\otimes S)_{i+d}=0$ and hence $G_{i+d}=0.$
\end{proof}

As a consequence we generalize the result in \cite[Corollary 4.9]{SHB} to all Noetherian local rings of dimension $d\geq 1.$
\begin{corollary}\label{general}
Let $(R,\mathfrak m)$ be a $d \ (\geq 1)$-dimensional Noetherian local ring of characteristic $p>0$ and let $M$ be an $R$-module of finite length. Suppose, for some $i\geq 1$, that $\beta_{i+j}^F(M,R)=0$ for all $0\leq j\leq d$.  Then $\pd_R M<\infty.$ 

In particular, for any system of parameters $\underline x=x_1,\ldots,x_d,$ if $\beta_{j}^F(R/(\underline x),R)=0$ for all $2\leq j\leq d+1$ then $R$ is Cohen-Macaulay.
\end{corollary}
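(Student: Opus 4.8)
The plan is to derive both assertions from Proposition~\ref{d+1}; the bridge is the principle indicated in Section~2 that, over a complete local ring, $\beta^F_\ell(M,R)=0$ if and only if the minimal free resolution of $M$ has stably phantom homology at the $\ell$th spot.

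\textbf{The first assertion.} First reduce to $R$ complete: replacing $R$ by $\widehat R$ and $M$ by $M\otimes_R\widehat R\cong M$ changes neither $\dim R$, nor $\pd_R M$, nor any $\beta^F_j(M,R)$, since a free resolution completes to one, the Frobenius functor commutes with completion, and finite length modules keep their length. Now let $(G_\bullet,\phi_\bullet)$ be a minimal free resolution of $M$ over the complete ring $R$. Then $\phi_{j+1}(G_{j+1})\subseteq\mm G_j$ for all $j$, and $M$ being of finite length forces $G_\bullet$ — hence every $F^e(G_\bullet)$ — to have finite length homology. The hypothesis $\beta^F_{i+j}(M,R)=0$ for $0\le j\le d$, via the principle above, says exactly that $G_\bullet$ has stably phantom homology at the $d+1$ consecutive spots $i,i+1,\dots,i+d$; so Proposition~\ref{d+1} applies and yields $G_{i+d}=0$, i.e. $\pd_R M\le i+d-1<\infty$.

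\textbf{The main obstacle} is the direction ``$\beta^F_\ell(M,R)=0\Rightarrow G_\bullet$ stably phantom at $\ell$'' of the bridging principle. The opposite direction is soft and not needed here: if $G_\bullet$ is stably phantom at $\ell$ over a complete reduced ring with test element $c$, then taking the exponent $q'=1$ in the definition of tight closure makes $c$ annihilate $H_\ell(F^e(G_\bullet))$ for every $e$, and an asymptotic length estimate then gives $\beta^F_\ell(M,R)=0$. The implication we need is a positivity statement: using that $\beta^F_\ell$ of the twisted complex $F^{e_0}(G_\bullet)$ equals $p^{e_0 d}\beta^F_\ell(M,R)$, and reducing to $R^{\mathrm{red}}$, one has to show that if $F^{e_0}(G_\bullet)$ fails to be phantom at the $\ell$th spot for some $e_0$ then the asymptotic length $\lambda\big(H_\ell(F^e(\,\cdot\,))\big)$ grows at least like a positive multiple of $p^{ed}$. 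This is where Seibert's length estimates and the techniques of \cite{Ab} enter.

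\textbf{The ``in particular'' statement.} Put $M=R/(\underline x)$; it has finite length, but the hypothesis supplies $\beta^F_j(M,R)=0$ only for the $d$ consecutive values $2\le j\le d+1$. To invoke the first assertion with $i=1$ I would show, in addition, that $\beta^F_1(R/(\underline x),R)=0$. A system of parameters minimally generates its (necessarily $\mm$-primary, hence height-$d$) ideal, so the minimal free resolution $G_\bullet$ of $R/(\underline x)$ coincides with the Koszul complex $K_\bullet(\underline x)$ in homological degrees $0$ and $1$; therefore the comparison map $K_\bullet(\underline x)\to G_\bullet$ induces a surjection $H_1(\underline x;{}^eR)\twoheadrightarrow H_1(F^e(G_\bullet))$, whence $\lambda(H_1(F^e(G_\bullet)))\le\lambda(H_1(\underline x^{[q]};R))$. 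By Lech's limit formula the Hilbert--Kunz multiplicity of $(\underline x)$ equals its ordinary multiplicity $e((\underline x);R)$, so the first partial Euler characteristic $\chi_1(\underline x^{[q]};R)=\lambda(R/\underline x^{[q]})-q^d\,e((\underline x);R)$ is $o(q^d)$; combined with the nonnegativity of the partial Euler characteristics (the contribution of the higher Koszul homology being of lower order), this forces $\lambda(H_1(\underline x^{[q]};R))=o(q^d)$, hence $\beta^F_1(R/(\underline x),R)=0$. (When $d=1$ this is immediate: $H_1(F^e(G_\bullet))$ is a subquotient of $(0:_Rx_1^q)\subseteq H^0_{\mm}(R)$, which has bounded length.) Now $\beta^F_j(R/(\underline x),R)=0$ for the $d+1$ consecutive values $1\le j\le d+1$, so the first assertion gives $\pd_R R/(\underline x)<\infty$. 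Since a local ring possessing a nonzero finite length module of finite projective dimension is Cohen--Macaulay (Auslander--Buchsbaum together with the New Intersection Theorem), $R$ is Cohen--Macaulay.
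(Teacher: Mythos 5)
Your overall strategy (complete the ring, translate vanishing of the $\beta^F$'s into stably phantom homology, apply Proposition~\ref{d+1}) is the paper's strategy, but the step you yourself label ``the main obstacle'' is exactly the content of the proof, and you do not supply it. Moreover, the bridging principle as you state it --- over a complete local ring, $\beta^F_\ell(M,R)=0$ if and only if the minimal resolution is stably phantom at the $\ell$th spot --- is not what is available: homology supported on components of $\spec R$ of dimension $<d$ contributes only $o(p^{ed})$ to the lengths, so vanishing of the normalized limit says nothing about such homology, whereas phantomness (annihilation up to tight closure, tested against elements of $R^o$, which avoid \emph{all} minimal primes) does see it. The correct statement, \cite[Proposition 2.6]{AL}, passes to the equidimensional quotient $R^{eq}$: vanishing of $\beta^F_{i+j}(M,R)$ for $0\le j\le d$ gives that $F^e_{R^{eq}}(G_\bullet\otimes R^{eq})$ is phantom at the spots $i,\dots,i+d$ for all $e$, and Proposition~\ref{d+1} is then applied over $R^{eq}$ to get $(G_\bullet\otimes R^{eq})_{i+d}=0$, hence $G_{i+d}=0$ since $G_{i+d}$ is free. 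Without this (or a worked-out substitute via the techniques of \cite{Ab} and Seibert, which you only gesture at), the first assertion is not proved.

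For the ``in particular'' statement your reduction is the right one, and the comparison-map surjection $H_1(\underline x^{[q]};R)\twoheadrightarrow H_1(F^e(G_\bullet))$ is correct (this is essentially \cite[Lemma 1.1]{D}, which the paper cites). But your justification that $\lambda(H_1(\underline x^{[q]};R))=o(q^d)$ does not work as written: Lech's formula together with Serre's theory gives $\chi_1(\underline x^{[q]})=\lambda(R/\underline x^{[q]})-q^d\,e(\underline x;R)=o(q^d)$, but $\lambda(H_1(\underline x^{[q]};R))=\chi_1(\underline x^{[q]})+\chi_2(\underline x^{[q]})$, and nonnegativity of the partial Euler characteristics gives no \emph{upper} bound on $\chi_2$; your parenthetical claim that ``the contribution of the higher Koszul homology is of lower order'' is precisely the nontrivial fact being invoked (the paper cites \cite{R} for the vanishing of the limit). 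So this step also needs a citation or a proof. Your final deduction --- a nonzero finite length module of finite projective dimension plus the new intersection theorem forces $R$ Cohen--Macaulay --- matches the paper.
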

\begin{proof}
Without loss of generality we may assume that $R$ is complete. Let $G_\bullet$ be a minimal free resolution of $M$ and $R^{eq}=R/I$ where $I$ is the intersection of primary components of the ideal $(0)$ associated to the primes $\mathfrak p$ such that $\dim R/\mathfrak{p}=\dim R.$ 

By \cite[Proposition 2.6]{AL}, $F_{R^{eq}}^e(G_\bullet\otimes R^{eq})$ is phantom at the $i+j$th spots for all $0\leq j\leq d$ and $e\geq 0.$ Then by Proposition \ref{d+1}, we have $(G_\bullet\otimes R^{eq})_{i+d}=0$ and hence $G_{i+d}=0.$

Now suppose that for some system of parameters, $\underline x$, $\beta_{j}^F(R/(\underline x),R)=0$ for all $2\leq j\leq d+1$.  By \cite[Lemma 1.1]{D}, \cite{R}, we have $$\beta_{1}^F(R/(\underline x),R)\leq \lim\limits_{e\to\infty}\frac{\lambda(H_1({\underline x}^{[q]};R))}{q^d}=0$$ where $H_1$ denotes the first Koszul homology. Then by the first part of the corollary and the {\it{new intersection theorem}} \cite{Ro87}, we get $R$ is Cohen-Macaulay.
\end{proof}

The next result follows by the same proof as \cite[Corollary 3.5]{AL}. For the sake of completeness we include the proof.
\begin{corollary}
	Let $(R,\mm)$ be a $d\  (\geq 1)$-dimensional excellent reduced local ring of characteristic $p>0$ and let $M$ be an $R$-module of finite length. Let $T=R^+\mbox{ or }R^{\infty}.$ Suppose $\tor_{i+j}^R(M,T)=0$ for all $0\leq j\leq d$ with some $i\geq 1.$ Then $\pd_R M<\infty.$
	\end{corollary}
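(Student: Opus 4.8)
The plan is to reduce the statement to Proposition~\ref{d+1}. Let $G_\bullet$ be a minimal free resolution of $M$; it suffices to prove $\pd_{\widehat R}\widehat M<\infty$, since projective dimension is insensitive to completion and $\widehat R$ is again excellent and reduced. The heart of the argument will be to show that the hypothesis $\tor^R_{i+j}(M,T)=0$ for $0\le j\le d$ forces $G_\bullet$ to have \emph{stably} phantom homology at the $(i+j)$th spots, i.e. that $F^e(G_\bullet)$ is phantom at the $(i+j)$th spot for every $e\ge0$. Since tight closure (hence phantom homology of a complex of free modules) persists under completion, $\widehat G_\bullet=G_\bullet\otimes_R\widehat R$ then also has stably phantom homology at the $(i+j)$th spots, so Proposition~\ref{d+1} gives $\widehat G_{i+d}=0$, equivalently $G_{i+d}=0$; by minimality $G_k=0$ for all $k\ge i+d$, whence $\pd_R M\le i+d-1<\infty$.

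For $T=R^\infty=\bigcup_e R^{1/p^e}$ I would carry out this translation over $R$ directly. Since $R$ is reduced, the $q$th power map is bijective, so $x\mapsto x^{1/p^e}$ is an $R$-module isomorphism ${}^e\!R\xrightarrow{\ \sim\ }R^{1/p^e}$; consequently $G_\bullet\otimes_R R^{1/p^e}\cong F^e(G_\bullet)$ as complexes, and the inclusions $R^{1/p^e}\hookrightarrow R^{1/p^{e+1}}$ become the canonical Frobenius maps $H_k(F^e(G_\bullet))\to H_k(F^{e+1}(G_\bullet))$. As $\tor$ commutes with filtered colimits,
\[
\tor^R_k(M,R^\infty)\ \cong\ \varinjlim_e\, H_k\big(F^e(G_\bullet)\big),
\]
the colimit taken along these Frobenius maps. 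So $\tor^R_{i+j}(M,R^\infty)=0$ says precisely that every homology class of every $F^e(G_\bullet)$ in degree $i+j$ is annihilated by finitely many applications of Frobenius. Now fix $e$ and a cycle $z\in\ker(F^e(\phi_{i+j}))$; the vanishing gives $z^{q_0}\in\im(F^{e+e_0}(\phi_{i+j+1}))=\big(\im(F^e(\phi_{i+j+1}))\big)^{[q_0]}$ for some $q_0=p^{e_0}$, and raising to successive $p$th powers (using $(W^{[q_0]})^{[q']}=W^{[q_0q']}$) yields $z^{q}\in\big(\im(F^e(\phi_{i+j+1}))\big)^{[q]}$ for all $q\gg0$. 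Hence $z\in\big(\im(F^e(\phi_{i+j+1}))\big)^{*}_{F^e(G_{i+j})}$, with test element $c=1$, and therefore $F^e(G_\bullet)$ is phantom at the $(i+j)$th spot for every $e$, as wanted.

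For $T=R^+$ I would invoke the same principle, following the proof of \cite[Corollary 3.5]{AL}. After reducing (as there) to the case that $R$ is a local domain, one uses the standard tight-closure-theoretic properties of $R^+$ over excellent domains --- in particular that the plus closure of a submodule of a finitely generated free module is contained in its tight closure \cite{HH}, together with the compatibility of $R^+$ with the Frobenius functor --- to conclude, just as above, that $\tor^R_{i+j}(M,R^+)=0$ forces $F^e(G_\bullet)$ to be phantom at the $(i+j)$th spots for all $e$. Once stable phantomness is established the argument finishes as in the first paragraph; this is exactly the proof of \cite[Corollary 3.5]{AL}, now valid over an arbitrary excellent reduced local ring precisely because Proposition~\ref{d+1} carries no restrictive hypothesis.

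The step I expect to be the main obstacle is this passage from Tor-vanishing to \emph{stable} phantom homology: one must verify not merely that $G_\bullet$ is phantom at the relevant spots, but that the vanishing persists through every Frobenius twist and produces honest tight-closure containments. The $R^\infty$ case is handled cleanly by the colimit description; the $R^+$ case leans on the finer machinery for $R^+$ over excellent domains, and it is here that the hypotheses that $R$ be excellent and reduced do real work --- they supply, via \cite{HH2}, test elements that behave well under completion, which the proof of Proposition~\ref{d+1} also uses.
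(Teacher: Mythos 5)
Your proposal is correct and follows the paper's strategy: show that the Tor-vanishing forces $G_\bullet$ to be stably phantom at the spots $i+j$, $0\le j\le d$, and then invoke Proposition~\ref{d+1}. Where you differ is in how stable phantomness is extracted. For $T=R^\infty$ your direct-limit argument ($\tor_k(M,R^\infty)\cong\varinjlim_e H_k(F^e(G_\bullet))$ along the Frobenius transition maps, so every cycle of $F^e(G_\bullet)$ lies in the Frobenius closure of the boundaries, hence in their tight closure with multiplier $c=1$) is a clean, self-contained variant; the paper instead treats both choices of $T$ uniformly by taking $q$th roots of a cycle $a\in\ker(\alpha_{i+j}^{[q]})$ inside $G_{i+j}\otimes_R T$, using $\tor_{i+j}^R(M,T)=0$ to write $a^{1/q}$ as an image involving finitely many elements $x_1,\dots,x_n\in T$, and then contracting from the module-finite extension $S=R[x_1,\dots,x_n]$, where tight closure captures contracted extensions. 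Your version buys an explicit multiplier ($c=1$), avoids the capture theorem in that case, and makes the passage to the completion painless. For $T=R^+$ you essentially defer to \cite[Corollary 3.5]{AL}, which is exactly the proof the paper reproduces, and the ingredients you name (plus closure of a submodule of a free module lies in its tight closure; compatibility with $q$th roots) are the right ones; note, however, that the preliminary ``reduce to a local domain'' step you propose is neither needed nor obviously legitimate (the hypothesis $\tor_{i+j}^R(M,R^+)=0$ does not evidently descend to the quotients $R/\mathfrak p$), and the actual argument works directly over the reduced ring via the module-finite subextension as above.
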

\begin{proof}
	Let $(G_\bullet,\alpha_\bullet)$ be a minimal resolution of $M.$ By Proposition \ref{d+1}, it is enough to show that $G_\bullet$ is stably phantom at $i+j$th spots for all $0\leq j\leq d.$ Fix $j\in\{0,\ldots,d\}$ and let $a\in\ker(\alpha_{i+j}^{[q]}).$ Then $a^{1/q}\in \ker(\alpha_{i+j}\otimes_R 1_T)=\im(\alpha_{i+j+1}\otimes_R 1_T).$ Let $a^{1/q} =\sum\limits_{l=1}^nf_{i+j+1}\cdot x_l\in G_{i+j}\otimes_R T$ where each  $x_l\in T$ with $1\leq l\leq n$ and $f_{i+j+1}=\alpha_{i+j+1}\otimes_R1_T$ Let $S = R[x_1,\ldots, x_n]$, which is a module finite extension of $R$. Then $a\in\im(\alpha_{i+j+1}^{[q]}\otimes_R 1_S)\cap G_{i+j}\subseteq \im(\alpha_{i+j+1}^{[q]})_{G_{i+j}}^*.$
	\end{proof}
The next lemma is proved using similar ideas to \cite[Theorem 4.7]{SHB} where the theorem is proved for one-dimensional Noetherian local rings and minimal free resolutions of finite length modules.  Note that if Question~\ref{2nd question} has a positive answer and $i > d+1$, then a module $M$ satisfying the hypotheses of Lemma~\ref{nilpotent} would have to have  $\pd_R M < \infty$.

\begin{lemma}\label{nilpotent}
Let $(R,\mathfrak m)$ be a $d (\geq 1)$-dimensional Noetherian local ring of characteristic $p>0.$ Let $(G_\bullet,\phi_\bullet)$ be a complex of finitely generated free $R$-modules  such that $\phi_{j+1}(G_{j+1})\subseteq\mathfrak m G_{j}$ and $\lambda{(H_{j}(F_R^e(G_\bullet)))}<\infty$ for all $j,e\geq 0.$   Suppose $\im(\phi_{i+1})\subseteq N(R)G_i$ for some $i\geq 1$ where $N(R)$ is the nilradical of $R.$ Then 
\ben
\item $\im(\phi_{i+1})\subset H_{\mathfrak m}^0(G_i).$
\item $H_{i}(F_{R/\mathfrak p}^e(G_\bullet\otimes_R R/\mathfrak p))=0$ for all $e\geq 0$ and for all ${\mathfrak p}\in$ $\spec R\setminus\{\mathfrak m\}.$
\item $\lim\limits_{e\to\infty}\dfrac{\lambda(H_{i}(F_{R}^e(G_\bullet)))}{p^{ed}}=0.$
\een
\end{lemma}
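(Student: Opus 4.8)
The plan is to run all three parts off one observation: since $G_\bullet$ has finite length homology, for every prime $\mathfrak p\neq\mm$ the localization $(G_\bullet)_{\mathfrak p}$ is exact, hence (being a left complex of finitely generated free $R_{\mathfrak p}$-modules) split exact; and the hypothesis $\im(\phi_{i+1})\subseteq N(R)G_i$ is a strong \emph{local} condition, since the nilradical of each $R_{\mathfrak p}$ lies in its maximal ideal. For part (1) I would fix $\mathfrak p\neq\mm$: exactness at the $i$th spot gives $\im(\phi_{i+1})_{\mathfrak p}=\ker(\phi_i)_{\mathfrak p}$, and split exactness makes this a free direct summand of $(G_i)_{\mathfrak p}$; but it also lies in $N(R)_{\mathfrak p}(G_i)_{\mathfrak p}=N(R_{\mathfrak p})(G_i)_{\mathfrak p}\subseteq\mathfrak p R_{\mathfrak p}(G_i)_{\mathfrak p}$, and a free direct summand contained in the maximal ideal times the ambient free module is $0$ by Nakayama. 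Thus $\im(\phi_{i+1})_{\mathfrak p}=0$ for all $\mathfrak p\neq\mm$, so the finitely generated module $\im(\phi_{i+1})$ is supported only at $\mm$, hence has finite length, and therefore lies in $H^0_{\mm}(G_i)$.

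Part (3) is the easiest and is essentially independent: because $R$ is Noetherian, $N(R)^m=0$ for some $m$, so every entry of a matrix for $\phi_{i+1}$ has $q$th power $0$ once $q=p^e\geq m$; hence $F_R^e(\phi_{i+1})=0$ for all $e\gg0$. For such $e$ we get $H_i(F_R^e(G_\bullet))=\ker F_R^e(\phi_i)$, a finite length submodule (by hypothesis) of the free module $F_R^e(G_i)\cong R^{\rk G_i}$, hence contained in $H^0_{\mm}(R)^{\rk G_i}$; so $\lambda(H_i(F_R^e(G_\bullet)))\leq(\rk G_i)\,\lambda(H^0_{\mm}(R))$ for all $e\gg0$, a bound independent of $e$, and dividing by $p^{ed}$ with $d\geq1$ and letting $e\to\infty$ yields $0$.

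For part (2), which I expect to be the main obstacle, I would again fix $\mathfrak p\neq\mm$. The entries of $\phi_{i+1}$ lie in $N(R)\subseteq\mathfrak p$, so $\phi_{i+1}\otimes_R R/\mathfrak p$ is the zero map, and so is $F_{R/\mathfrak p}^e(\phi_{i+1}\otimes R/\mathfrak p)$; hence $H_i(F_{R/\mathfrak p}^e(G_\bullet\otimes R/\mathfrak p))=\ker F_{R/\mathfrak p}^e(\phi_i\otimes R/\mathfrak p)$, and it suffices to show this kernel is $0$. Now $\ker(\phi_i)_{\mathfrak p}=\im(\phi_{i+1})_{\mathfrak p}=0$ for $\mathfrak p\neq\mm$ (exactness at the $i$th spot, together with part (1)), so $\ker\phi_i$ has finite length; tensoring $0\to\ker\phi_i\to G_i\to\im\phi_i\to0$ with $R/\mathfrak p$ then exhibits $\ker(\phi_i\otimes R/\mathfrak p)$ as a quotient of $\ker\phi_i/\mathfrak p\ker\phi_i$, hence of finite length, and a finite length submodule of a free module over the positive-dimensional domain $R/\mathfrak p$ must vanish. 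So $\phi_i\otimes R/\mathfrak p$ is injective, and injectivity of a map of free modules over the domain $R/\mathfrak p$ is witnessed by a nonzero maximal minor, which raising all entries to the $q$th power carries to its (nonzero) $q$th power, since in characteristic $p$ the determinant of an entrywise $q$th power equals the $q$th power of the determinant. Hence $F_{R/\mathfrak p}^e(\phi_i\otimes R/\mathfrak p)$ is injective, and part (2) follows.

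The steps needing genuine care are all in part (2): one must check that $F_{R/\mathfrak p}^e(G_\bullet\otimes R/\mathfrak p)$ is precisely the complex over $R/\mathfrak p$ whose differentials are the entrywise $q$th powers of the reductions mod $\mathfrak p$ of $\phi_\bullet$ (this is what legitimizes both the vanishing of the $(i+1)$st differential and the survival of a maximal minor under Frobenius), and one must supply the small torsion-free-versus-torsion argument that a finite length submodule of a free module over a positive-dimensional domain is $0$. Everything else is Nakayama together with length bookkeeping.
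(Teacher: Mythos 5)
Your overall route is genuinely different from the paper's, and two of the three parts are correct as written. The paper proves (1) by comparing $\rk(\phi_i^{[q]})$ with $\rk(G_i)$ and invoking the split-acyclicity criterion \cite[Proposition 1.4.12]{BH} via the Fitting ideal $I_{b_i}(\phi_i^{[q]})$, proves (2) with the Buchsbaum--Eisenbud acyclicity theorem over $R/\mathfrak p$, and deduces (3) from (2) using a prime filtration and Seibert's additivity; by contrast, your Nakayama argument for (1) (a free direct summand of $(G_i)_{\mathfrak p}$ inside $\mathfrak p R_{\mathfrak p}(G_i)_{\mathfrak p}$ is zero) and your direct bound $\lambda(H_i(F^e_R(G_\bullet)))\le \rk(G_i)\,\lambda(H^0_{\mm}(R))$ for $e\gg 0$ in (3) are correct and simpler, since $F^e(\phi_{i+1})=0$ once $q$ exceeds the nilpotency index of $N(R)$. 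Your mechanism for (2) --- reduce to injectivity of $\phi_i\otimes R/\mathfrak p$ and preserve it under Frobenius because determinants commute with the entrywise $q$th power in characteristic $p$ --- is also sound and is close in spirit to the paper's use of $I_{b_i}$.

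The one genuine gap is the finite-length claim inside (2). Tensoring $0\to\ker\phi_i\to G_i\to\im\phi_i\to 0$ with $R/\mathfrak p$ does \emph{not} exhibit $\ker(\phi_i\otimes R/\mathfrak p)$ as a quotient of $\ker\phi_i/\mathfrak p\ker\phi_i$: that identifies only the kernel of $G_i/\mathfrak p G_i\to (\im\phi_i)\otimes R/\mathfrak p$, whereas $\phi_i\otimes R/\mathfrak p$ factors further through $(\im\phi_i)\otimes R/\mathfrak p\to G_{i-1}\otimes R/\mathfrak p$, which need not be injective; the kernel therefore also contains a piece coming from (a quotient of) $\tor_1^R(G_{i-1}/\im\phi_i,\,R/\mathfrak p)$. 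For a bare map this extra piece can be nonzero even when $\ker\phi_i=0$ (take $R=k[[x,y]]$, $\phi$ multiplication by $x$, $\mathfrak p=xR$: then $\ker\phi=0$ but $\ker(\phi\otimes R/\mathfrak p)\ne 0$). The conclusion you want is nonetheless true and can be repaired with the tools you already deploy: for any prime $\mathfrak q\ne\mm$, either $\mathfrak q\not\supseteq\mathfrak p$ and $(R/\mathfrak p)_{\mathfrak q}=0$, or $\mathfrak q\supseteq\mathfrak p$ and the localized complex is split exact with $\ker(\phi_i)_{\mathfrak q}=\im(\phi_{i+1})_{\mathfrak q}=0$ by your part (1), so $(\phi_i)_{\mathfrak q}$ is a \emph{split} injection and remains injective after applying $-\otimes R/\mathfrak p$. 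Hence $\ker(\phi_i\otimes R/\mathfrak p)$ is supported only at $\mm$, so it has finite length, and your torsion argument over the positive-dimensional domain $R/\mathfrak p$, followed by the minor-raised-to-the-$q$th-power step, finishes (2) as intended.
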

\begin{proof}
Let ${\mathfrak p}\in$ $\spec R\setminus\{\mathfrak m\}.$ Since $\lambda{(H_{j}(F_R^e(G_\bullet)))}<\infty$ for all $j,e\geq 0,$ the following complex 
$$G^q_\bullet\hspace{0.2cm} \cdots\longrightarrow (G_{i+1})_{\mathfrak p}\overset{(\phi_{i+1}^{[q]})_{\mathfrak p}}\longrightarrow (G_{i})_{\mathfrak p}\overset{(\phi_{i}^{[q]})_{\mathfrak p}}\longrightarrow(G_{i-1})_{\mathfrak p}\overset{(\phi_{i-1}^{[q]})_{\mathfrak p}}\longrightarrow\cdots\overset{(\phi_{1}^{[q]})_{\mathfrak p}}\longrightarrow(G_{0})_{\mathfrak p}\longrightarrow 0$$
is split exact. 

By hypothesis, there exits $q_0=p^{e_0}$ such that $\im(\phi_{i+1}^{[q]})=0$ for all $q\geq q_0.$ Hence for any $q\geq q_0,$ $(\phi_{i+1}^{[q]})_{\mathfrak p}=0$ and $(G_{i})_{\mathfrak p}$ splits inside $(G_{i-1})_{\mathfrak p}$ via $(\phi_{i}^{[q]})_{\mathfrak p}.$ Thus $b_i:=\rk((G_{i})_{\mathfrak p})=\rk(G_i)=\rk((\phi_{i}^{[q]})_{\mathfrak p}).$ Note that $G_i$ and $G_{i-1}$ are free modules and localizing and taking powers can only decrease the rank of $\phi_i.$ Thus we have $b_i=\rk(\phi_{i}^{[q]})$ for all $q\geq 1.$  

For all $q\geq q_0,$ consider the complex $$L_\bullet^q: \hspace{0.2cm} 0\longrightarrow G_i\overset{\phi_{i}^{[q]}}\longrightarrow G_{i-1}\longrightarrow 0.$$ Since $L_\bullet^q\otimes R_{\mathfrak p}$ is split acyclic, by \cite[Proposition 1.4.12((a)$\Rightarrow$(b))]{BH}, $I_{b_i}(\phi_{i}^{[q]})\nsubseteq {\mathfrak p}.$ Therefore $I_{b_i}(\phi_{i}^{[1]})\nsubseteq {\mathfrak p}.$ 

$(1)$ Consider the complex $$L_\bullet^1: \hspace{0.2cm} 0\longrightarrow G_i\overset{\phi_{i}^{[1]}}\longrightarrow G_{i-1}\longrightarrow 0.$$ Let ${\mathfrak p}\in$ $\spec R\setminus\{\mathfrak m\}.$ By \cite[Proposition 1.4.12((b)$\Rightarrow$(a))]{BH}, we have $L_\bullet^1\otimes R_{\mathfrak p}$ is split acyclic and hence $\ker((\phi_{i}^{[1]})_{\mathfrak p})=0.$ Then $$(\im(\phi_{i+1}^{[1]}))_{\mathfrak p}=\im((\phi_{i+1}^{[1]})_{\mathfrak p})\subseteq \ker((\phi_{i}^{[1]})_{\mathfrak p})=0.$$ Hence if ${\mathfrak p}\in$ Spec$R\setminus\{\mathfrak m\}$ then ${\mathfrak p}\notin$ Supp$(\im(\phi_{i+1})).$ Thus $\im(\phi_{i+1})\subset H_{\mathfrak m}^0(G_i).$

$(2)$ Let ${\mathfrak p}\in$ Spec$R\setminus\{\mathfrak m\}.$  For all $q\geq 1,$ consider the complex of finite free $R/\mathfrak p$-modules. $$T_\bullet^q:  \hspace{0.2cm} 0\longrightarrow G_i\otimes_R R/\mathfrak p\xrightarrow{\phi_{i}^{[q]}\otimes_R 1_{R/\mathfrak p}} G_{i-1}\otimes_R R/\mathfrak p\longrightarrow 0.$$ Since  for all $q\geq 1,$ $I_{b_i}(\phi_{i}^{[q]})\nsubseteq {\mathfrak p},$ we have $\grade (I_{b_i}(\phi_{i}^{[q]}\otimes_R 1_{R/\mathfrak p}))\geq 1$  for all $q\geq 1.$ Hence by the Buchsbaum-Eisenbud Theorem \cite{BE}, \cite[Theorem 1.4.13]{BH},
 we have $T_\bullet^q$ is acyclic for all $q\geq 1$.  Therefore $\phi_{i}^{[q]}\otimes_R 1_{R/\mathfrak p}$ is injective  for all $q\geq 1.$  Since $\im(\phi_{i+1})\subseteq \mathfrak pG_i,$ we have $H_{i}(F_{R/\mathfrak p}^e(G_\bullet\otimes_R R/\mathfrak p))=0$ for all $e\geq 0.$

$(3)$ Consider a filtration $0=M_0\subseteq M_1\subseteq\cdots\subseteq M_l=R$ of $R$ such that $M_j/M_{j-1}\cong R/\mathfrak p_j$ for some $\mathfrak p_j\in \spec R.$  Then by \cite[Proposition 1 (a),(b)]{S} and part $(2)$ of the Lemma, we have $\lim\limits_{e\to\infty}\dfrac{\lambda(H_{i}(F_{R}^e(G_\bullet)))}{p^{ed}}=\sum\limits_{j=1}^l\lim\limits_{e\to\infty}\dfrac{\lambda(H_{i}(F_{R/\mathfrak p_j}^e(G_\bullet\otimes_R R/\mathfrak p_j)))}{p^{ed}}=0.$ 
\end{proof}

Using the same proofs of \cite[Lemma 4.6]{SHB} and \cite[Proposition 1]{S}, we get the following lemma:
\begin{lemma}\label{primes}
Let $(R,\mathfrak m)$ be a one-dimensional Noetherian local ring of characteristic $p>0.$ Let $(G_\bullet,\phi_\bullet)$ be a complex of finitely generated free $R$-modules  such that $\phi_{j+1}(G_{j+1})\subseteq\mathfrak m G_{j}$ and $\lambda{(H_{j}(F_R^e(G_\bullet)))}<\infty$ for all $j,e\geq 0.$ Suppose $\im(\phi_{i+1})\nsubseteq {\mathfrak p}G_i$ for some ${\mathfrak p}\in$ $\min(R)$ and $i\geq 1.$ Then $$\lim\limits_{e\to\infty}\frac{\lambda{(H_{i}(F_R^e(G_\bullet)))}}{q}>0.$$
\end{lemma}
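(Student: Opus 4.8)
The statement is a non-vanishing analogue of Lemma~\ref{nilpotent}(3) in dimension one: if the $(i{+}1)$st boundary map survives modulo some minimal prime $\mathfrak p$, then $H_i$ contributes positively to the Frobenius-type limit. The plan is to reduce to the domain $R/\mathfrak p$ and then produce, for infinitely many (in fact all large) $q$, a uniform lower bound of the form $\lambda(H_i(F^e_R(G_\bullet))) \geq c\cdot q$ for a positive constant $c$ independent of $e$.

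First I would invoke the associativity formula / Seibert-type additivity (\cite[Proposition 1]{S}, as cited for Lemma~\ref{primes}) to write $\lambda(H_i(F^e_R(G_\bullet)))$ as a sum over a prime filtration $0 = M_0 \subseteq \cdots \subseteq M_l = R$ of contributions $\lambda(H_i(F^e_{R/\mathfrak p_j}(G_\bullet \otimes R/\mathfrak p_j)))$, up to terms that are $o(q)$ or bounded by lower-dimensional (hence finite, uniformly bounded) data. Since all terms are non-negative, it suffices to show that the single summand corresponding to the chosen minimal prime $\mathfrak p \in \min(R)$ grows at least linearly in $q$. So after this reduction I may assume $R$ is a one-dimensional domain and $\im(\phi_{i+1}) \nsubseteq \mathfrak p G_i$ becomes $\phi_{i+1} \neq 0$, i.e.\ $\phi_{i+1}^{[q]} \neq 0$ for all $q$ (raising to $q$th powers does not kill a map over a domain).

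Next, over the one-dimensional domain $R$, on the punctured spectrum $G_\bullet$ is split exact, so $\ker(\phi_i^{[q]})$ and $\im(\phi_{i+1}^{[q]})$ agree at the (single) minimal prime; thus $H_i(F^e_R(G_\bullet)) = \ker(\phi_i^{[q]})/\im(\phi_{i+1}^{[q]})$ is a finite-length module, and the point is to bound its length below. The key is to track a fixed nonzero element: pick $m \in G_{i+1}$ with $\phi_{i+1}(m) \neq 0$, so $m^q := 1\otimes m \in F^e(G_{i+1})$ has $\phi_{i+1}^{[q]}(m^q) \neq 0$ for all $q$, and $\phi_{i+1}^{[q]}(m^q) \in \ker(\phi_i^{[q]})$. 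The submodule of $\ker(\phi_i^{[q]})$ it generates, modulo anything in $\im$ that it isn't responsible for, should be controlled: concretely, $\im(\phi_{i+1}^{[q]})$ is generated by the $q$th Frobenius powers of the columns of $\phi_{i+1}$, and one estimates the colength using that if $I = \ann$ of the relevant cyclic pieces then $I^{[q]}$ has colength growing like $q^{\dim} = q^1$ times the Hilbert-Samuel multiplicity — i.e.\ $\lambda(R/I^{[q]}) \asymp q\cdot e_I(R)$. The cleanest route is: localize at $\mathfrak p$ to see the generic rank $b$ of $\phi_i$, note $\ker(\phi_i^{[q]})$ has rank $\rk(G_i) - b$ generically and after picking a rank-one free summand $R\cdot u \subseteq \ker(\phi_i^{[q]})$ generically, compute that $\lambda\!\left(\ker(\phi_i^{[q]})/\im(\phi_{i+1}^{[q]})\right) \geq \lambda(R/(I_{\mathrm{col}}^{[q]} + (\text{torsion})))$, and the ideal $I_{\mathrm{col}}$ coming from the entries of $\phi_{i+1}$ relative to $u$ is a proper nonzero ideal (nonzero because $\phi_{i+1} \neq 0$; proper because $\phi_{i+1}(G_{i+1}) \subseteq \mathfrak m G_i$), hence $\mathfrak m$-primary in our one-dimensional domain, giving $\lambda(R/I_{\mathrm{col}}^{[q]}) \geq e(I_{\mathrm{col}})\cdot q > 0$.

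**Main obstacle.** The delicate step is making precise the passage "length of $H_i(F^e)$ is bounded below by the colength of a Frobenius power of a nonzero proper ideal," because $\ker(\phi_i^{[q]})$ and $\im(\phi_{i+1}^{[q]})$ are not individually cyclic and the quotient could a priori shed the contribution of $m^q$. I expect the right tool is exactly the one already used in the proof of Lemma~\ref{nilpotent}: the Buchsbaum--Eisenbud acyclicity criterion applied to the short complex $0 \to G_i \xrightarrow{\phi_i^{[q]}} G_{i-1} \to 0$ over $R/\mathfrak p$, which pins down the rank $b_i = \rk(\phi_i^{[q]})$ uniformly in $q$ (this is carried out verbatim there), together with the fact that the Fitting ideal $I_{b_i+1}(\phi_i^{[q]})$ being $\mathfrak m$-primary forces $\ker(\phi_i^{[q]})$ to be "close" to a free module of the right rank with an error measured by $\lambda(R/I_{b_i+1}(\phi_i^{[q]}))$; then one compares $\im(\phi_{i+1}^{[q]})$ inside it. I would phrase the final estimate so that the linear growth $\gtrsim q$ comes cleanly from $\lambda(R/J^{[q]}) \geq e(J)q$ for an $\mathfrak m$-primary $J$, mirroring how \cite[Lemma 4.6]{SHB} is invoked, rather than from a full Frobenius-limit computation.
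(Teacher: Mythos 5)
Your first step --- invoking Seibert's additivity \cite[Proposition 1]{S} along a prime filtration, noting that every minimal prime occurs in such a filtration and that all the reduced complexes still have finite length homology, so that after dividing by $q=p^e$ it suffices to prove positivity of the limit over the one-dimensional domain $S=R/\mathfrak p$, where the hypothesis becomes $\overline{\phi}_{i+1}\neq 0$ --- is exactly the reduction the paper intends: the paper offers no argument of its own, asserting only that the lemma follows ``using the same proofs'' of \cite[Lemma 4.6]{SHB} and \cite[Proposition 1]{S}. The entire content of the lemma is therefore the second step, a lower bound $\lambda\bigl(H_i(F_S^e(G_\bullet\otimes S))\bigr)\geq cq$ with $c>0$ uniform in $q$, i.e.\ the content of \cite[Lemma 4.6]{SHB}, and this is precisely where your proposal has a genuine gap, which you yourself flag but do not close.

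Concretely, the mechanism you sketch cannot work as stated. By minimality every entry of $\phi_{i+1}$ lies in $\mm$, so the distinguished cycle $v^{[q]}$ (the entrywise $q$th power of a fixed nonzero column $v$ of $\overline{\phi}_{i+1}$) has all entries in $\mm^{[q]}$; hence the comparisons you propose collapse: $(\mm^{[q]}G_i:_S v^{[q]})=S$, the image of $Sv^{[q]}$ in $G_i/\mm^{[q]}G_i$ is zero, and the estimate $\lambda(S/I_{\mathrm{col}}^{[q]})\geq e(I_{\mathrm{col}})\,q$ for the ideal of entries never gets transferred to $\lambda(\ker\overline{\phi}_i^{[q]}/\im\overline{\phi}_{i+1}^{[q]})$, because the cycle you track can be (and typically is) absorbed by exactly the submodules you are comparing against. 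Your fallback does not apply either: in the proof of Lemma~\ref{nilpotent} the identity $\rk(\phi_i^{[q]})=\rk G_i$ and the use of the Buchsbaum--Eisenbud criterion rest on $\phi_{i+1}^{[q]}=0$ for $q\gg 0$ (image inside the nilradical), which is the opposite of the present hypothesis; here $\ker\overline{\phi}_i^{[q]}$ is generically nonzero, the two-term complex $0\to G_i\to G_{i-1}\to 0$ is not acyclic, and that argument transfers nothing ``verbatim.'' Moreover $\ker\overline{\phi}_i^{[q]}$ need not contain a free rank-one summand over $S$ (only generically, which carries no length information), and the coefficients of the columns of $\overline{\phi}_{i+1}^{[q]}$ with respect to any integral choice of $u$ are not $q$th powers of fixed elements, so there is no fixed $\mm$-primary ideal $J$ whose Frobenius powers control the quotient. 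The decisive one-dimensional estimate thus remains unproved in your write-up; either the argument of \cite[Lemma 4.6]{SHB} must be reproduced (or cited, as the paper does), or a genuinely different lower-bound mechanism must be supplied.
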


\begin{theorem}\label{important}
	Let $(R,\mathfrak m)$ be a $d \ (\geq 1)$-dimensional formally equidimensional local ring of characteristic $p>0.$ Let $(G_\bullet,\phi_\bullet)$ be a complex of finitely generated free $R$-modules such that $\phi_{j+1}(G_{j+1})\subseteq\mathfrak m G_{j}$ and $\lambda{(H_{j}(F_R^e(G_\bullet)))}<\infty$ for all $j,e\geq 0.$ Suppose $$\lim\limits_{e\to\infty}\frac{\lambda{(H_{i+j}(F_{R}^e(G_\bullet)))}}{q^d}=0.$$ for all $0\leq j\leq d-1$ with some $i> 0.$ Then $\im({\phi_{i+d}})\subset N(R)G_{i+d-1}$ where $N(R)$ is the nilradical of $R.$
\end{theorem}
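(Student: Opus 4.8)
The plan is to induct on $d$, taking Lemma~\ref{primes} as the base case and, in the inductive step, passing to a quotient by a suitably generic parameter in order to invoke the $(d-1)$-dimensional statement. For $d=1$ the argument is immediate: if $\im\phi_{i+1}\not\subseteq N(R)G_i=\bigcap_{\mathfrak p\in\min R}\mathfrak p G_i$, then $\im\phi_{i+1}\not\subseteq\mathfrak p G_i$ for some $\mathfrak p\in\min R$, and Lemma~\ref{primes} gives $\lim_e\lambda(H_i(F^e_R(G_\bullet)))/q>0$, contradicting the hypothesis.

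For the inductive step ($d\ge 2$) I would first reduce to the case that $R$ is a complete local domain. Replacing $R$ by $\hat R$ is harmless: $\hat R$ is equidimensional because $R$ is formally equidimensional, the hypotheses transfer along the faithfully flat map $R\to\hat R$ (the finite length modules $H_j(F^e_R(G_\bullet))$ keep their lengths), and the conclusion descends, since if $\im\phi_{i+d}\otimes\hat R\subseteq N(\hat R)(G_{i+d-1}\otimes\hat R)$ then, as $R^{\mathrm{red}}$ embeds into $\hat R/N(R)\hat R$ and a reduced ring has no nonzero nilpotents, $\im\phi_{i+d}\subseteq N(R)G_{i+d-1}$. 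Now $R$ is complete and equidimensional, so every minimal prime $\mathfrak p$ has $\dim R/\mathfrak p=d$; by the additivity of these limits over a prime filtration (Seibert, exactly as in the proof of Lemma~\ref{nilpotent}(3)), each $\lim_e\lambda(H_{i+j}(F^e_R(G_\bullet)))/q^{d}$ is a nonnegative-coefficient sum of the corresponding limits over the rings $R/\mathfrak p$, $\mathfrak p\in\min R$, so all of those vanish as well. Since $\im\phi_{i+d}\subseteq N(R)G_{i+d-1}$ is equivalent to $\phi_{i+d}\otimes R/\mathfrak p=0$ for every $\mathfrak p\in\min R$, I may assume $R$ is a complete local domain and aim to prove $\phi_{i+d}=0$.

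Assume $\phi_{i+d}\ne 0$ and fix a nonzero entry $a$ of a matrix representing it. As $R$ is a domain of dimension $\ge 2$ it has infinitely many height-one primes, only finitely many of which contain $a$, so I may pick $0\ne x\in\mm$ lying in a height-one prime that is minimal over $(x)$ and does not contain $a$. Put $S=R/xR$: it is complete, $(d-1)$-dimensional, and equidimensional (hence formally equidimensional), since $R$ is universally catenary and equidimensional and $x$ avoids the minimal prime. The image of $a$ in $S$ is non-nilpotent, so $\im(\phi_{i+d}\otimes S)\not\subseteq N(S)(G_{i+d-1}\otimes S)$. Applying the inductive hypothesis to $S$, the complex $G_\bullet\otimes S$, and start index $i+1$ (in contrapositive form) yields $k\in\{i+1,\dots,i+d-1\}$ with $\lim_e\lambda(H_k(F^e_S(G_\bullet\otimes S)))/q^{\,d-1}>0$. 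On the other hand, from the exact sequence $0\to F^e_R(G_\bullet)\xrightarrow{\cdot x}F^e_R(G_\bullet)\to F^e_S(G_\bullet\otimes S)\to 0$ (exact because $x$ is a nonzerodivisor on the free modules of $F^e_R(G_\bullet)$) and its long exact homology sequence,
\[
\lambda\big(H_k(F^e_S(G_\bullet\otimes S))\big)=\lambda\big(H_k(F^e_R(G_\bullet))/xH_k(F^e_R(G_\bullet))\big)+\lambda\big(\ann_{H_{k-1}(F^e_R(G_\bullet))}x\big).
\]
So a contradiction, and hence $\phi_{i+d}=0$, will follow once one establishes the key fact: for each $m\in\{i,\dots,i+d-1\}$ one has $\lim_e\lambda\big(H_m(F^e_R(G_\bullet))/xH_m(F^e_R(G_\bullet))\big)/q^{\,d-1}=0$ whenever $\lim_e\lambda\big(H_m(F^e_R(G_\bullet))\big)/q^{\,d}=0$.

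I expect this last point to be the real obstacle. It asserts that a family of finite length modules with $\lambda=o(q^{d})$ drops its ``Hilbert--Kunz dimension'' on reduction modulo a generic parameter, becoming $o(q^{d-1})$; the naive bound $\lambda(N/xN)\le\lambda(N)$ only yields $o(q^{d})$, so the Frobenius structure of the family $\{H_m(F^e_R(G_\bullet))\}_e$ must be exploited. I would deduce it from Seibert's asymptotic results for lengths of homology of Frobenius powers of complexes (\cite{S}), after choosing $x$ as part of a system of parameters behaving generically for the complex; the single obstruction to exclude is a homology family of length of order $q^{d-1}$ annihilated by $\mm$, which is prevented by the Frobenius structure (a minimal presentation remains minimal under $F^e$, and $H_m(F^e_R(G_\bullet))$ is a fixed subquotient of $F^e$ of fixed free modules through fixed maps). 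A possibly cleaner alternative that sidesteps the reduction modulo $x$ would be to identify $H_m(F^e_R(G_\bullet))$ with the hyper-local-cohomology $\mathbb H^{-m}_\mm(F^e_R(G_\bullet))$, compute it from the complex of local cohomology modules $H^\bullet_\mm(R)^{\,n_\bullet}$ with its Frobenius action, and read the vanishing of the limits off the ranks of the maps $\phi_\bullet$ directly, tying in with the ``local cohomology'' theme of the paper.
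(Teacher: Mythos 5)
Your setup (induction on $d$ with Lemma~\ref{primes} as the base case, completion, and reduction via a prime filtration to the case of a complete local domain, aiming at $\phi_{i+d}=0$) matches the paper. But the heart of your inductive step rests on the ``key fact'' that you yourself flag and do not prove: that $\lim_e\lambda\big(H_m(F^e_R(G_\bullet))\big)/q^{d}=0$ forces $\lim_e\lambda\big(H_m(F^e_R(G_\bullet))/xH_m(F^e_R(G_\bullet))\big)/q^{d-1}=0$ (and likewise for the $x$-torsion term $\ann_{H_{k-1}}x$) for your chosen parameter $x$. This is a genuine gap, not a routine appeal to Seibert: the hypothesis $\lambda(H_m(F^e_R(G_\bullet)))=o(q^d)$ is perfectly consistent with these lengths growing like $q^{d-1}$, in which case $H_m/xH_m$ (and the $x$-torsion of $H_{k-1}$) can also have length of order $q^{d-1}$, the limits over $S=R/xR$ normalized by $q^{d-1}$ need not vanish, and your contrapositive application of the inductive hypothesis yields no contradiction. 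Nothing in ``the Frobenius structure'' obviously excludes this intermediate growth rate --- ruling it out is exactly the difficulty the theorem is about --- and your alternative sketch via hypercohomology of local cohomology is likewise only a hope. Your choice of $x$ avoiding a nonzero entry of $\phi_{i+d}$ is fine, but it does not touch this issue.

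The paper circumvents precisely this obstacle with tight closure rather than length estimates. Using \cite[Proposition 2.6]{AL}, the vanishing limits are converted into the statement that $F^e_R(G_\bullet)$ is (stably) phantom at the spots $i,\dots,i+d-1$; a test element $c$ then kills these homologies (\cite[Lemma 9.15(d)]{HH}), so $c^2$ kills the homologies of $F^e_{S_n}(G_\bullet\otimes S_n)$ for $S_n=R/x^nR$ and \emph{all} $n$, giving phantomness over $S_n$, which after passing to $T=(S_n)^{red}$ (equidimensional, so $T=T^{eq}$) is converted back into vanishing $(d-1)$-normalized limits over $T$ and fed into the induction. Note also that in this scheme a single quotient does not finish the proof: induction plus Lemma~\ref{nilpotent} only give $\im(\phi_{i+d}\otimes S_n)\subseteq H^0_{\mm S_n}(G_{i+d-1}\otimes S_n)$, and one must run over all $n$, use colon-capturing \cite[Theorem 7.9]{HH} and the test element to get $c\cdot\im(\phi_{i+d})\subseteq\bigcap_n(x^n)=0$. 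So both the transfer of smallness to the hypersurface quotient and the final descent require the phantom/test-element machinery that your proposal replaces with an unproved asymptotic claim.
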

\begin{proof}
	Without loss of generality we may assume that $R$ is complete and equidimensional. We use induction on the dimension of $R.$ 
	
	If $d=1$, then the result holds by Lemma \ref{primes}.
	
	 Let $d\geq 2.$ By \cite[Proposition 1]{S}, for all $\mathfrak p\in \min(R),$ we have $$\lim\limits_{e\to\infty}\frac{\lambda{(H_{i+j}(F_{R/\mathfrak p}^e(G_\bullet\otimes_R R/\mathfrak p)))}}{q^d}=0.$$ for all $0\leq j\leq d-1.$ Suppose we can prove the result for the complete local rings $R/\mathfrak p$ for all  $\mathfrak p\in \min(R).$ Then $\im({\phi_{i+d}}\otimes_R R/\mathfrak p)\subset N(R/\mathfrak p)(G_{i+d-1}\otimes_R R/\mathfrak p)=0\mbox{ in each }R/\mathfrak p.$ Therefore $\im(\phi_{i+d})\subset N(R)(G_{i+d-1})$, as desired.
	
	Replacing $R$ by $R/\mathfrak p$ for each $\mathfrak p\in \min(R),$ we may assume that $R$ is  a complete local domain and $$\lim\limits_{e\to\infty}\frac{\lambda{(H_{i+j}(F_{R}^e(G_\bullet)))}}{q^d}=0.$$ for all $0\leq j\leq d-1.$  We will show $\phi_{i+d}=0.$
	
	By \cite[Proposition 2.6]{AL}, $F_R^e(G_\bullet)$ is phantom at $i+j$th spot for all $0\leq j\leq d-1$ and $e\geq 0.$ Let $c$ be a test element in $R.$ Choose a nonzero element $x\in \mathfrak m\setminus A$ where  $A=\min(R/(c)).$ Then $c,x^n$ is a part system of parameter of $R$ for all $n\geq 1.$ Define  $S_n=R/x^nR$ for all $n\geq 1$ and let $T = (S_n)^{red} = R/\sqrt{x^nR} = R/\sqrt{xR}$.  Note that $\dim T = d-1$ and $T$ is equidimensional.
	
	For all $n\geq 1,$ consider the short exact sequences of complexes
	$$0\longrightarrow F_{R}^e(G_\bullet)\overset{. {x^n}}\longrightarrow F_{R}^e(G_\bullet)\longrightarrow F_{S_n}^e(G_\bullet\otimes_R S_n)\longrightarrow 0.$$
	
	By \cite[Lemma 9.15(d)]{HH}, $cH_{i+j}(F_R^e(G_\bullet))=0$ for all $0\leq j\leq d-1$ and $e\geq 0.$ Therefore for all $0\leq j\leq d-1,$ $e\geq 0$ and $n\geq 1,$ $c^{2}H_{i+j}(F_{S_n}^e(G_\bullet\otimes_{R} S_n))=0.$ Note that $\overline{c^{2}}$ (image of $c^{2}$ in $S_n$) is in $(S_n)^o.$ Hence, by \cite[Lemma 9.15(d)]{HH}, for all $1\leq j\leq d-1,$ $e\geq 0$ and $n\geq 1,$ $F_{S_n}^e(G_\bullet\otimes_{R} S_n)$ is phantom at $i+j$th spot. 
	
	Since $T=(S_n)^{red}$ is formally equidimensional, we have $T=T^{eq}$ and by  \cite[Lemma 9.15(b)]{HH} and \cite[Proposition 2.6]{AL}, for all $1\leq j\leq d-1,$ and $e\geq 0$,  $F_{T^{eq}}^e(G_\bullet\otimes_R T^{eq})$ is phantom at the $i+j$th spot. Hence by \cite[Proposition 2.6]{AL}, for all $1\leq j\leq d-1$  
	$$\lim\limits_{e\to\infty}\frac{\lambda{(H_{i+j}(F_{T}^e(G_\bullet\otimes_R T)))}}{q^{d-1}}=0.$$
	
	By induction and the fact that $T$ is reduced, we have 
	$$\im((\phi_{i+d}\otimes_R S_n) \otimes_{S_n}T)\subseteq N(T)((G_{i+d-1}\otimes_R S_n) \otimes_{S_n}T)=0\mbox{ in }T$$ where  $N(T) =0$ is the nilradical of $T.$
	Therefore $$\im(\phi_{i+d}\otimes_R S_n) \subseteq N(S_n)(G_{i+d-1}\otimes_R S_n) \mbox{ where } N(S_n)\mbox{ is the nilradical of } S_n.$$ 
	
	Hence for all integers $n\geq 1,$ by Lemma \ref{nilpotent}, $\im(\phi_{i+d}\otimes_R S_n)\subseteq H_{\mathfrak mS_n}^0(G_{i+d-1}\otimes_R S_n).$ 
 Let $a\in \im \phi_{i+d}.$ Then for all integers $n\geq 1,$ $a-b_n\in(x^{n})$ for some $b_n\in (x^{n}):_{G_{i+d-1}}\mathfrak m^{\infty}.$ Then,
 using colon-capturing \cite[Theorem 7.9]{HH} for the last equality,
  $$
 b_n\in (x^{n}):_{G_{i+d-1}}\mathfrak (c)^{\infty}= ((x^{n}):_R\mathfrak (c)^{\infty})^l=(x^{n})^* R^l
 $$
  where $l$ is the rank of $G_{i+d-1}.$ Since $c$ is a test element of $R,$ for all integers $n\geq 1,$ we have $ca\in (x^n).$ Therefore $ca\in\bigcap\limits_{n\geq 1}{(x^{n})}=0.$ Since $c$ is a nonzerodivisor of $R,$ we get $a=0.$
\end{proof}

\begin{corollary}\label{result}
	Let $(R,\mathfrak m)$ be a $d\ (\geq 1)$-dimensional Noetherian local ring of characteristic $p>0.$  Let $M$ be an $R$-module of finite length. Then the following are equivalent.
	\begin{itemize}
		\item [$(1)$] $\beta_{i}^F(M,R)=0$ for all $i\geq 1.$
		\item [$(2)$] For some $i \ge 0$, $\beta^F_{i+j}(M,R)=0$ for all $0\leq j\leq d$.
		\item [$(3)$] $\pd_R M<\infty.$ 
		\item [$(4)$] $R$ is Cohen-Macaulay and for some $i \ge 1$, $\beta^F_{i+j}(M,R)=0$ for all $0\leq j\leq d-1$.
		\item [$(5)$] $(R,\mathfrak m)$ is formally equidimensional, $\depth R\geq 1$ and for some $i \ge 1$, $\beta_{i+j}^F(M,R)=0$ for all $0\leq j\leq d-1$.
	\end{itemize}
\end{corollary}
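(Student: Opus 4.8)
The plan is to prove the cycle of implications $(3)\Rightarrow(1)\Rightarrow(2)\Rightarrow(3)$, settling the first three conditions, and then $(3)\Rightarrow(4)\Rightarrow(5)\Rightarrow(3)$, so that the only implication of real substance is $(5)\Rightarrow(3)$; everything else amounts to quoting classical homological facts together with Corollary~\ref{general}, Theorem~\ref{important} and Lemma~\ref{nilpotent}. Since $(4)$ and $(5)$ are meaningful only when $M\neq 0$, I would assume $M\neq 0$ and fix a minimal free resolution $(G_\bullet,\phi_\bullet)$ of $M$; then $\phi_{j+1}(G_{j+1})\subseteq\mathfrak m G_j$ for all $j$, each $H_j(F_R^e(G_\bullet))$ has finite length (by the Remark preceding Proposition~\ref{d+1}), and $\beta_k^F(M,R)=\lim_{e\to\infty}\lambda(H_k(F_R^e(G_\bullet)))/q^d$ for all $k$. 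For $(3)\Rightarrow(1)$, I would invoke the theorem of Peskine and Szpiro that the Frobenius functor preserves acyclicity of a finite free resolution of a module of finite projective dimension, so $H_i(F_R^e(G_\bullet))=\tor_i^R(M,{}^e\!R)=0$ for all $i\geq 1$ and $e\geq 0$. The implication $(1)\Rightarrow(2)$ is immediate; and for $(2)\Rightarrow(3)$, when the index $i$ in $(2)$ is $\geq 1$ this is precisely Corollary~\ref{general}, while $i=0$ cannot occur, since the surjection $M\twoheadrightarrow k$ and right exactness of $F^e$ produce surjections $F^e(M)\twoheadrightarrow R/\mathfrak m^{[q]}$, so $\beta_0^F(M,R)\geq e_{HK}(R)>0$ because $d\geq 1$.

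For $(3)\Rightarrow(4)$: since $\pd_R M<\infty$ and $\depth_R M=0$, the Auslander--Buchsbaum formula gives $\pd_R M=\depth R$, whereas the New Intersection Theorem \cite{Ro87}, applied to the finite resolution $G_\bullet$ (whose only nonzero homology module is $M\neq 0$, of finite length), gives $\pd_R M\geq\dim R$; hence $\depth R=\dim R$ and $R$ is Cohen--Macaulay, while the vanishing of the $\beta_{i+j}^F(M,R)$ is supplied by $(1)$, already established. For $(4)\Rightarrow(5)$: a Cohen--Macaulay local ring of dimension $d\geq 1$ has depth $d\geq 1$, and its completion is again Cohen--Macaulay and therefore equidimensional, so $R$ is formally equidimensional; the hypothesis on the $\beta_{i+j}^F(M,R)$ is word for word the same.

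It remains to carry out $(5)\Rightarrow(3)$, which is the heart of the statement. The hypothesis reads $\lim_{e\to\infty}\lambda(H_{i+j}(F_R^e(G_\bullet)))/q^d=0$ for all $0\leq j\leq d-1$ with some $i\geq 1$, and $R$ is formally equidimensional, so Theorem~\ref{important} applies to $(G_\bullet,\phi_\bullet)$ and yields $\im(\phi_{i+d})\subseteq N(R)G_{i+d-1}$. Invoking Lemma~\ref{nilpotent}(1) at the spot $i+d-1$ (legitimate since $i+d-1\geq 1$) then gives $\im(\phi_{i+d})\subseteq H^0_{\mathfrak m}(G_{i+d-1})$; but $\depth R\geq 1$ forces $H^0_{\mathfrak m}(R)=0$, hence $H^0_{\mathfrak m}(G_{i+d-1})=0$, so $\phi_{i+d}=0$. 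Finally, exactness of the resolution at $G_{i+d}$ combined with minimality gives $G_{i+d}=\ker(\phi_{i+d})=\im(\phi_{i+d+1})\subseteq\mathfrak m G_{i+d}$, whence $G_{i+d}=0$ by Nakayama and $\pd_R M\leq i+d-1<\infty$. The main obstacle is located in this last step, but its genuinely hard part has already been done in Theorem~\ref{important} and Lemma~\ref{nilpotent}; what remains to check is routine --- the index bookkeeping in applying Lemma~\ref{nilpotent} at a shifted spot, the exclusion of the degenerate cases ($i=0$ in $(2)$, and $M=0$ throughout), and the classical inputs (Peskine--Szpiro, Auslander--Buchsbaum, the New Intersection Theorem).
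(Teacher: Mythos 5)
Your proof is correct and follows essentially the same route as the paper: Corollary~\ref{general} together with Peskine--Szpiro for the cycle $(1)\Leftrightarrow(2)\Leftrightarrow(3)$, the (improved) new intersection theorem plus Auslander--Buchsbaum for $(3)\Rightarrow(4)$, and Theorem~\ref{important} followed by Lemma~\ref{nilpotent} and $H^0_{\mathfrak m}(R)=0$ for $(5)\Rightarrow(3)$. Your explicit treatment of the $i=0$ case in $(2)$ via $\beta_0^F(M,R)\geq e_{HK}(R)>0$ is a small point the paper leaves implicit, but it does not change the argument.
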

\begin{proof}
The equivalence $(1)\Rightarrow (2)\Rightarrow (3)\Rightarrow (1)$ follows from Corollary \ref{general} and \cite[Theorem 1.7]{PS}. It is clear that $(4)\Rightarrow (5).$
\\	$(5)\Rightarrow (3)$ Let $(G_\bullet,\phi_\bullet)$ be a minimal resolution of $M.$ By Theorem \ref{important}, $\im({\phi_{i+d}})\subset N(R)G_{i+d-1}.$ Therefore by Lemma \ref{nilpotent}, $$\im({\phi_{i+d}})\subset H_{\mathfrak m}^0(G_{i+d-1})=0.$$
$(3)\Rightarrow (4)$ Since $\lm(M)<\infty,$ by the {\it improved new intersection theorem} \cite[Corollary 9.4.2]{BH} and the Auslander-Buchsbaum Theorem \cite[Theorem 1.33]{BH}, we have $R$ is Cohen-Macaulay.
\end{proof}
Using  ideas similar to   \cite[Proposition 4.11] {SHB} and Corollary \ref{result}, we generalize the result  \cite[Proposition 4.11]{SHB} for all Noetherian local rings of dimension $d\geq 1.$ 
\begin{proposition}
	Let $(R,\mathfrak m,k)$ be a $d (\geq 1)$-dimensional formally equidimensional local ring of characteristic $p>0.$ Let $I$ be an $\mm$-primary integrally closed ideal. If $\beta^F_{i+j}(R/I,R)=0$ for all $0\leq j\leq d-1$ with some $i\geq 1$ then $R$ is regular.
	\end{proposition}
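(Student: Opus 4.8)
The plan is to argue, along the lines of \cite[Proposition~4.11]{SHB}, that the vanishing of the Frobenius Betti numbers forces $\pd_R(R/I)<\infty$, hence that $R$ is Cohen--Macaulay, and then to prove the key fact that a Cohen--Macaulay local ring carrying an $\mm$-primary integrally closed ideal of finite projective dimension is regular. First I would reduce to $R$ complete: integral closure of an $\mm$-primary ideal commutes with completion, $\beta^F_j(R/I,R)$ is read off a free resolution and so is unchanged, ``formally equidimensional'' means exactly that $\widehat R$ is equidimensional, and $R$ is regular iff $\widehat R$ is. Passing afterwards to $R[X]_{\mm R[X]}$, a faithfully flat extension under which all the hypotheses and the conclusion are preserved, I may also assume the residue field infinite. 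So assume $R$ complete, equidimensional, with infinite residue field.

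To obtain finite projective dimension, first note that $H^0_{\mm}(R)$ is a nilpotent ideal — it has finite length, so each of its elements $a$ satisfies $a^n=0$ for some $n$, an equation of integral dependence over $I$ — whence $H^0_{\mm}(R)\subseteq\overline I=I$. Let $(G_\bullet,\phi_\bullet)$ be a minimal free resolution of $M=R/I$. Since $\beta^F_{i+j}(M,R)=0$ for $0\le j\le d-1$, Theorem~\ref{important} gives $\im\phi_{i+d}\subseteq N(R)G_{i+d-1}$, and then Lemma~\ref{nilpotent}(1), applied with index $i+d-1$, gives $\im\phi_{i+d}\subseteq H^0_{\mm}(G_{i+d-1})=H^0_{\mm}(R)G_{i+d-1}\subseteq I\,G_{i+d-1}$. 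If $\operatorname{depth}R\ge 1$ then $H^0_{\mm}(R)=0$, so $\phi_{i+d}=0$, so $\pd_R M<\infty$ (exactly as in Corollary~\ref{result}(5)$\Rightarrow$(3)), and then $R$ is Cohen--Macaulay with $\pd_R(R/I)=d$, by the improved new intersection theorem and Auslander--Buchsbaum. The case $\operatorname{depth}R=0$ has to be excluded: there $\pd_RM=\infty$, so $\Omega:=\im\phi_{i+d}=\ker\phi_{i+d-1}$ is a nonzero finite-length submodule of $G_{i+d-1}$ contained in $H^0_{\mm}(R)G_{i+d-1}$; since $\phi_{i+d-1}$ has entries in $\mm$ it kills $\operatorname{Soc}(G_{i+d-1})$, so $\operatorname{Soc}(G_{i+d-1})\subseteq\Omega$, and likewise $\operatorname{Soc}(G_{i+d})\subseteq\ker\phi_{i+d}$; I expect these inclusions along the infinite minimal tail of $G_\bullet$, together with Lemma~\ref{nilpotent}(2), to be incompatible, but the details of this are the first place I anticipate real work.

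It then remains to show: $R$ Cohen--Macaulay, $I$ $\mm$-primary and integrally closed, and $\pd_R(R/I)<\infty$ imply $R$ regular. In dimension $d=1$ this is clean: $I=(f)$ with $f$ a nonzerodivisor (finite projective dimension forces $I$ principal), $R$ is complete so its integral closure $\overline R$ in the total quotient ring is module-finite, and $\overline R/R$ is supported only at $\mm$ (for a minimal prime $\mathfrak p$, $R_{\mathfrak p}$ is Artinian, hence equal to its own total quotient ring and so to $\overline{R_{\mathfrak p}}$). Thus if $\overline R\ne R$ then $\mm\in\operatorname{Ass}(\overline R/R)$, so some $u\in\overline R\setminus R$ is killed by $\mm$, and then $fu\in f\overline R\cap R=\overline{(f)}$ while $fu\notin(f)$; this contradicts $I=(f)$ being integrally closed, so $\overline R=R$ and $R$ is a discrete valuation ring. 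For $d\ge2$ I would descend to dimension one: for a general element $x\in I$ lying in a minimal reduction of $I$, the ring $R/xR$ is Cohen--Macaulay of dimension $d-1$, $R/I=(R/xR)/I(R/xR)$ has finite projective dimension $d-1$ over it (change of rings, $x$ being $R$-regular with $xM=0$), and $I(R/xR)$ is still $\mm$-primary and integrally closed for $x$ general; by induction $R/xR$ is regular, and taking $x\notin\mm^2$ (possible when $I\not\subseteq\mm^2$) shows $R$ regular. The residual case $I\subseteq\mm^2$ — the second place where I anticipate real work — I would attack through multiplicities, using that $e_{\mathrm{HK}}(\mm)=1$ characterizes regularity in characteristic $p$ and that $\pd_R(R/I)<\infty$ forces all $\beta^F_n(R/I,R)=0$ by Corollary~\ref{result}, which together should pin down $e_{\mathrm{HK}}(\mm)$.
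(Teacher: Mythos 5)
Your scaffolding (reduce to the complete equidimensional case, use Theorem~\ref{important} and Lemma~\ref{nilpotent} to control $\im\phi_{i+d}$, then upgrade finite projective dimension of $R/I$ to regularity via integral closedness) parallels the paper's, but both places where you ``anticipate real work'' are genuine gaps, and they are exactly the two places where the paper invokes the theorem of Corso--Huneke--Katz--Vasconcelos \cite[Corollary 3.3]{CHKV}: for an integrally closed $\mm$-primary ideal $I$, vanishing of a single $\tor_j^R(R/I,N)$ with $j\ge 1$ forces $\pd_R N<\infty$. For your first gap (excluding $\depth R=0$), the socle-chasing you sketch makes no use of the hypothesis that $I$ is integrally closed, and it is precisely the depth hypothesis that makes Corollary~\ref{result}(5) work, so one should not expect a formal argument with the minimal tail of $G_\bullet$ alone to close this. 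The paper instead extracts depth from the ideal: Theorem~\ref{important} gives $\im\phi_{i+d}\subseteq N(R)G_{i+d-1}$, Lemma~\ref{nilpotent}(2) (taken at $e=0$, with index $i+d-1$) then gives $\tor_{i+d-1}^R(R/I,R/\mathfrak p)=0$ for every prime $\mathfrak p\neq\mm$, whence $\pd_R(R/\mathfrak p)<\infty$ by \cite[Corollary 3.3]{CHKV}, and the Auslander--Buchsbaum formula yields $\depth R\ge\depth(R/\mathfrak p)\ge 1$; only now does Corollary~\ref{result} apply to give $\pd_R(R/I)<\infty$.

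Your second half (``$R$ Cohen--Macaulay, $I$ integrally closed $\mm$-primary, $\pd_R(R/I)<\infty$ implies $R$ regular'') is also not a proof as it stands: the $d=1$ normality argument is fine, but the inductive descent requires $I(R/xR)$ to remain integrally closed for a general $x$ in a minimal reduction, which is not automatic (integral closedness is not in general preserved under passing to $R/xR$, and you give no argument), and you leave the case $I\subseteq\mm^2$ entirely open, with only a vague Hilbert--Kunz suggestion. The same CHKV result closes this step in one line, with no Cohen--Macaulay hypothesis needed: $\pd_R(R/I)<\infty$ gives $\tor_j^R(R/I,k)=0$ for $j\gg 0$, so \cite[Corollary 3.3]{CHKV} gives $\pd_R k<\infty$, i.e., $R$ is regular. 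Thus the single missing ingredient in both of your gaps is this rigidity property of integrally closed $\mm$-primary ideals; without citing or proving it, the argument does not go through.
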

\begin{proof}
	Let $G_\bullet$ be a minimal free resolution of $R/I$ and ${\mathfrak p}\in$ Spec$R\setminus\{\mathfrak m\}.$ By Theorem \ref{important} and Lemma \ref{nilpotent}, we have $\tor_{i+d-1}^R(R/I,R/\mathfrak p)=0.$ Therefore by \cite[Corollary 3.3]{CHKV}, pd$_RR/\mathfrak p<\infty$ and thus by the Auslander-Buchsbaum Theorem \cite[Theorem 1.33]{BH}, $\depth R\geq 1.$ Hence by Corollary \ref{result}, $\tor_{j}^R(R/I,k)=0$ for some $j \gg 0.$ Again using \cite[Corollary 3.3]{CHKV}, we get pd$_R k<\infty.$ Hence $R$ is regular.
	\end{proof}
\begin{question}
	Let $(R,\mathfrak m,k)$ be a $d (\geq 2)$-dimensional Cohen-Macaulay local ringof characteristic $p>0,$ $M$ be an $R$-module of finite length and $\alpha$ be an integer with $1\leq \alpha\leq d-1$. What conditions on $R$ allow us to conclude that vanishing of $\beta^F_{i+j}(M,R)$ with some $i>0$ and all $0\leq j\leq \alpha-1$ force $\pd_RM<\infty?$
	
	\end{question}
	
\section{Syzygies with finite length}	

In this section we turn from dealing with asymptotic measures of length via Frobenius, to the open questions regarding syzygies of modules of finite length.  In particular we want to address the second question asked by De Stefani, Huneke, and N\'{u}\~{n}ez-Betancourt \cite{SHB}:
\begin{question}[Question 1.2]
Let $R$ be a $d$-dimensional local ring, and let $M$ be a finitely generated $R$-module such that $\pd_R(M) = \infty$ and $\lambda(M) < \infty$.  If $i > d+1$, then must the length of the $i$th syzygy be infinite?
\end{question}

One way that they gain information concerning this question is to show that in dimension one, a finite length syzygy of a finite length module $M$ has a length that can be computed as an alternating sum of lengths of  Tor's of $M$ against  $R/xR$ where $x$ is in a suitably high power of $\mm$ (\cite[Proposition 5.9]{SHB}).  We give a simpler proof of this fact when $R$ has any dimension and we can replace $xR$ by an $\mm$-primary ideal $J$ in a suitable high power of $\mm$ --- see Proposition~\ref{extended}.  From this result we can, in dimension two, derive Theorem~\ref{dimension two}, that for a sufficiently general $x \in \mm$, we may use the alternating sum of Tor's of $M$ against $R/xR$ (the proof must be done indirectly, because $\lambda(R/xR) = \infty)$, and then a similar argument as given in \cite{SHB} shows that no third syzygy of a finite length module can have finite length.  Perhaps it is the case that Question~\ref{2nd question} should be strengthened to ask whether or not, if $\dim R > 1$, {\it any} higher syzygy of a finite length module can be finite length!

\begin{remark}\label{completion}
	Let $(R,\mm)$ be a Noetherian local ring.  $\hat R$ denotes the $\mm$-adic completion of $R.$ Then $\syz_n^{\hat R}(M\otimes_R {\hat R})={\hat R}\otimes_R \syz_n^{ R}(M).$ 
	\end{remark}
\begin{remark}
	If $(R,\mathfrak m)$ is a Noetherian local ring with $\depth R\geq 1$ and $M$ is an $R$-module of finite length with infinite projective dimension then $\lambda(\syz_iM)=\infty$ for all $i>0$ \cite[Lemma 3.4]{A}. 
	\end{remark}
	
We start by generalizing the known result that Question~\ref{2nd question} has a positive answer when $(R,\mm)$ is Buchsbaum.  We show below that, for rings of positive dimension, if the size of the socle of $R$ is large compared to the size of 	$H^0_\mm(R)$, then Question~\ref{2nd question} has an affirmative answer.

\begin{theorem}\label{big socle}
	Let $(R,\mathfrak m, k)$ be a Noetherian local ring  of dimension $d\geq 1$ and $\depth R=0$.  Set $t = \lambda(H_{\mm}^0(R))-\lambda(\soc(R))$ and $l = \dim(\soc(R))$.  Let $M$ be an $R$-module of finite length. 
	\begin{enumerate}
	\item If $d =1$ and $l > t$ then $\lambda(\syz_iM)=\infty$ for all $i>0.$ 
	\item If $d\geq 2$  and $l \ge t$ then $\lambda(\syz_iM)=\infty$ for all $i>0.$ 
		\end{enumerate}
	\end{theorem}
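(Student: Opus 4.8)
The plan is as follows. By Remark~\ref{completion} we may pass to the completion and assume $R$ is complete. We may assume $M\neq 0$; since $\depth R=0$, the Auslander--Buchsbaum formula then forces $\pd_R M=\infty$, because a finite free resolution would make $M$ free, hence zero (as $M$ has finite length and $\dim R\geq 1$). Suppose, for a contradiction, that $\syz_iM$ has finite length for some $i\geq 1$, and choose $i$ minimal with this property. Write $\Omega:=\syz_iM$ and $N:=\syz_{i-1}M$ (with $\syz_0M=M$) and fix a minimal free resolution $\cdots\to F_1\to F_0\to M\to 0$, so there is a minimal short exact sequence $0\to\Omega\to F_{i-1}\to N\to 0$; note that $N=M$ if $i=1$, while $\lambda(N)=\infty$ if $i\geq 2$ by minimality of $i$.

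Everything would rest on two structural facts. First: since $\Omega$ is a finite-length submodule of the free module $F_{i-1}$, we have $\Omega\subseteq H^0_{\mm}(F_{i-1})=H^0_{\mm}(R)^{\rk F_{i-1}}$ and $\soc(\Omega)=\Omega\cap\soc(R)^{\rk F_{i-1}}$, so $\Omega/\soc(\Omega)$ embeds into $\bigl(H^0_{\mm}(R)/\soc(R)\bigr)^{\rk F_{i-1}}$, yielding the fundamental estimate
\[
\lambda(\Omega)-\lambda(\soc\Omega)\ \leq\ t\cdot\rk F_{i-1}.
\]
Second: because $H^0_{\mm}(\Omega)=\Omega$ and $H^j_{\mm}(\Omega)=0$ for $j\geq 1$, the long exact local cohomology sequence of $0\to\Omega\to F_{i-1}\to N\to 0$ gives an exact sequence $0\to\Omega\to H^0_{\mm}(R)^{\rk F_{i-1}}\to H^0_{\mm}(N)\to 0$ and isomorphisms $H^j_{\mm}(N)\cong H^j_{\mm}(R)^{\rk F_{i-1}}$ for $j\geq 1$; in particular $N/H^0_{\mm}(N)$ is a free module over $\ov R:=R/H^0_{\mm}(R)$, a ring of positive depth, so $\soc(N)=\soc\bigl(H^0_{\mm}(N)\bigr)$. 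In addition, minimality gives $N/\mm N\cong k^{\rk F_{i-1}}$, and the parallel $\operatorname{Hom}_R(k,-)$-sequence links $\soc(R)^{\rk F_{i-1}}$, $\soc(N)$, $\soc(\Omega)$ and an $\operatorname{Ext}^1_R(k,\Omega)$-term.

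The contradiction should come from using that $N$ is itself a syzygy: when $i\geq 2$, $N$ is a submodule of the free module $F_{i-2}$, so $H^0_{\mm}(N)\hookrightarrow H^0_{\mm}(R)^{\rk F_{i-2}}$ and $\soc(N)\hookrightarrow\soc(R)^{\rk F_{i-2}}$; when $i=1$, $H^0_{\mm}(N)=M$. Propagating these inclusions along the chain $M,\syz_1M,\dots,\syz_{i-1}M=N$ and combining the local cohomology length counts, the socle counts, and the estimate above, one is led to a numerical inequality involving $t$, $l$, the ranks $\rk F_0,\dots,\rk F_{i-1}$ and the lengths $\lambda\bigl(H^j_{\mm}(R)\bigr)$. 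The hypothesis $l\geq t$ (respectively $l>t$ when $d=1$) is precisely what makes this inequality impossible unless $\Omega=0$, contradicting $\pd_R M=\infty$. Dimension one needs the strict inequality because there the top local cohomology $H^1_{\mm}(R)$ cannot vanish (Grothendieck non-vanishing), which absorbs one unit of slack that for $d\geq 2$ is available at no cost; for $d\geq 2$ one may instead induct on $d$, cutting by a sufficiently general parameter $x\in\mm$ with $0:_Rx$ of finite length (possible since $\depth R=0$), after checking that a finite-length syzygy over $R$ forces one over $R/xR$ and that $R/xR$ inherits the socle inequality with the correct strictness.

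The real work, I expect, is the bookkeeping in the third step: $N=\syz_{i-1}M$ has infinite length when $i\geq 2$, so length arithmetic applies only to its finite-length subquotients $H^0_{\mm}(N)$ and $\soc(N)$, which must be tracked through several short exact sequences along with the ranks $\rk F_j$ and the modules $H^j_{\mm}(R)$. In the $d\geq 2$ reduction the subtle point is comparing $\syz^R$ with $\syz^{R/xR}$ when $x$ is necessarily a zerodivisor on $R$. By contrast, the structural core --- $\Omega\subseteq H^0_{\mm}(R)^{\rk F_{i-1}}$ with socle inside $\soc(R)^{\rk F_{i-1}}$, and $N/H^0_{\mm}(N)$ free over $\ov R$ --- is elementary.
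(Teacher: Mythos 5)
Your proposal is a plan, not a proof, and the gap is exactly where you say ``the real work'' lies: the decisive numerical inequality is never stated, let alone derived. After the two structural observations (the embedding $\Omega\subseteq H^0_\mm(R)^{\rk F_{i-1}}$ giving $\lambda(\Omega)-\lambda(\soc\Omega)\le t\cdot\rk F_{i-1}$, and the local cohomology sequence $0\to\Omega\to H^0_\mm(R)^{\rk F_{i-1}}\to H^0_\mm(N)\to 0$), you assert that ``propagating these inclusions along the chain'' yields an inequality which $l\ge t$ (resp.\ $l>t$) makes impossible, but no such inequality is produced, and it is not at all clear that this bookkeeping closes up: the estimate you do have involves only $t$, and nothing in the sketch ever brings the socle dimension $l$ into play against it. Two further points are shaky. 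The claim that $N/H^0_\mm(N)$ is free over $R/H^0_\mm(R)$ does not follow from having the same local cohomology as a free module (the weaker fact you actually use, $\soc(N)\subseteq H^0_\mm(N)$, is trivially true). More seriously, the $d\ge 2$ case is delegated to an induction ``cutting by a general parameter $x$,'' but since $\depth R=0$ every such $x$ is a zerodivisor, there is no straightforward comparison of syzygies of $M$ over $R$ and over $R/xR$, and you give no argument that $R/xR$ inherits the inequality between $\dim\soc$ and $\lambda(H^0_\mm)-\lambda(\soc)$, which can change drastically modulo a parameter. You also never explain concretely how Grothendieck non-vanishing ``absorbs one unit of slack'' in dimension one.

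For contrast, the paper's argument is a short Tor computation that makes $l$ and $t$ fight directly. If $\lambda(\syz_{j+1}M)<\infty$ with $j\ge d$, then $\tor_j^R(M,R/H^0_\mm(R))=0$ (by \cite[Lemma 5.2]{SHB}); writing $I=H^0_\mm(R)$, $J=\soc(R)\cong k^{\oplus l}$, the sequences $0\to I/J\to R/J\to R/I\to 0$ and $0\to J\to R\to R/J\to 0$ give a surjection $\tor_j^R(M,I/J)\to\tor_j^R(M,R/J)\cong\tor_{j-1}^R(M,k)^{\oplus l}$, and the elementary bound $\lambda(\tor_j(M,N))\le\lambda(N)\beta_j(M)$ for finite length $N$ yields $l\,\beta_{j-1}(M)\le t\,\beta_j(M)$. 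When $l>t$ this forces $\beta_j>\beta_{j-1}$, contradicting the finite length of $\syz_{j+1}M$; when $l\ge t$ it forces $\beta_j\ge\beta_{j-1}$, and \cite[Proposition 5.5]{SHB} then forces $d=1$, which handles $d\ge 2$ without any induction on dimension (the range $1\le i\le d$ being covered by \cite[Lemma 4.2]{A}). If you want to salvage your local-cohomology approach, you would need to actually extract an inequality pitting $l$ against $t$ from the socle/$H^0_\mm$ bookkeeping and to replace the $R/xR$ reduction by something that avoids comparing syzygies across a zerodivisor quotient.
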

\begin{proof}
In the case that $t=0$ (e.g., $R$ is Buchsbaum), the result is already known.  (See \cite[Proposition 5.3]{SHB}, \cite[Proposition 4.4]{A}.   The Buchsbaum hypothesis in the statement is not really used, merely the condition that $H^0_\mm(R)$ is a vector space.)  Also, the proof works for arbitrary positive dimension.

By \cite[Lemma 4.2]{A}, $\lambda(\syz_iM)=\infty$ for all $1\leq i\leq d.$ 

 Suppose $\lambda(\syz_{j+1}M)<\infty$ for some $j\geq d.$ Then by \cite[Lemma 5.2]{SHB}, $\tor_j^R(M,R/H_{\mm}^0(R))=0.$ 
Denote $H_{\mm}^0(R)$ and $\soc(R)$ by $I$ and $J$ respectively. Since $\depth R=0,$ we get $J\neq 0.$ Consider the short exact sequences of $R$-modules
	$$0\longrightarrow I/J\longrightarrow R/J\longrightarrow R/I\longrightarrow0$$ and $$0\longrightarrow J\longrightarrow R\longrightarrow R/J\longrightarrow0.$$ 
	From the long exact sequences of homologies induced by the above two short exact sequences, we get that 
	 $$\tor_j^R(M,I/J)\longrightarrow \tor_j^R(M,R/J)\longrightarrow 0$$ and  $$\tor_{j}^R(M,R/J)\cong \tor_{j-1}^R(M,J)={\tor_{j-1}^R(M,k)}^{{\oplus} l}.$$ Therefore $\lambda(\tor_j^R(M,I/J))\geq l\beta_{j-1}(M).$

For any finite length module $N$ and any index $i$, it is clear by induction on the length of $N$ that $\lambda(\tor_i(M,N)) \le \lambda(N) \lambda(\tor_i(M,k))$. Hence
$$l\beta_{j-1}(M)\leq\lambda(\tor_j^R(M,I/J))\leq t\lambda(\tor_j^R(M,k))=t\beta_{j}(M).$$

$(1)$ If $l>t,$ then $\beta_j>\beta_{j-1}$ which contradicts $\lambda(\syz_{j+1}M)<\infty.$

$(2)$ Since $l\geq t,$ by \cite[Proposition 5.5]{SHB}, we get $d=1$, which gives a contradiction.
	\end{proof}

The proposition below is well known.  We provide a proof for convenience of the reader.
	\begin{proposition}\label{length}
	Let $(R,\mathfrak m)$ be a Noetherian local ring of dimension $d\geq 1.$ Let $$F. \hspace{2mm} 0\longrightarrow M_n\overset{\delta_n}\longrightarrow M_{n-1}\overset{\delta_{n-1}}\longrightarrow\cdots\longrightarrow M_1\overset{\delta_1}\longrightarrow M_0\overset{\delta_0}\longrightarrow 0$$ be a complex of finitely generated $R$-modules of finite length. Then $$\sum\limits_{i=0}^n(-1)^i\lambda(M_i)=\sum\limits_{i=0}^n(-1)^i\lambda(H_i(F.)).$$
		\end{proposition}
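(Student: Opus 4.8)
The plan is to run the standard Euler-characteristic bookkeeping, invoking nothing beyond additivity of length on short exact sequences. First I would note that every module appearing below is of finite length: each $\ker\delta_i$ and each $\im\delta_i$ is a submodule of one of the $M_j$, and each $H_i(F.)$ is a subquotient of an $M_j$, so finiteness is inherited from the hypothesis $\lambda(M_i)<\infty$. Fix the conventions $\delta_{n+1}=0$ and $\delta_0\colon M_0\to 0$, and write $Z_i=\ker\delta_i$ and $B_i=\im\delta_{i+1}$ for $0\le i\le n$, together with $B_{-1}:=\im\delta_0=0$; note also $B_n=0$, since its source $M_{n+1}$ is zero.

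Next I would write down, for each $0\le i\le n$, the two short exact sequences of finite length modules
$$0\longrightarrow Z_i\longrightarrow M_i\overset{\delta_i}{\longrightarrow}\im\delta_i\longrightarrow 0,\qquad 0\longrightarrow B_i\longrightarrow Z_i\longrightarrow H_i(F.)\longrightarrow 0.$$
Since $\im\delta_i=B_{i-1}$, additivity of length gives $\lambda(M_i)=\lambda(Z_i)+\lambda(B_{i-1})$ and $\lambda(Z_i)=\lambda(B_i)+\lambda(H_i(F.))$, hence $\lambda(M_i)=\lambda(H_i(F.))+\lambda(B_i)+\lambda(B_{i-1})$ for every $0\le i\le n$.

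Finally I would multiply this identity by $(-1)^i$ and sum over $0\le i\le n$. Reindexing, $\sum_{i=0}^n(-1)^i\lambda(B_{i-1})=-\sum_{i=0}^{n-1}(-1)^i\lambda(B_i)$ because $B_{-1}=0$, while $\sum_{i=0}^n(-1)^i\lambda(B_i)=\sum_{i=0}^{n-1}(-1)^i\lambda(B_i)$ because $B_n=0$; these two sums cancel, leaving $\sum_{i=0}^n(-1)^i\lambda(M_i)=\sum_{i=0}^n(-1)^i\lambda(H_i(F.))$, as claimed. There is no genuine obstacle here; the only point needing care is the boundary bookkeeping at $i=0$ and $i=n$, which is precisely what the conventions $B_{-1}=B_n=0$ handle. (Alternatively one could induct on $n$, but the direct computation above is shorter.)
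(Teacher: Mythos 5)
Your proof is correct and rests on exactly the same ingredients as the paper's: the short exact sequences $0\to Z_i\to M_i\to \im\delta_i\to 0$ and $0\to B_i\to Z_i\to H_i(F.)\to 0$ together with additivity of length. The paper merely packages this as an induction on $n$, whereas you telescope the alternating sum directly; the boundary conventions $B_{-1}=B_n=0$ that you flag are handled correctly, so there is no gap.
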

	\begin{proof}
We prove using induction on $n.$ Let $n=1.$ Then we have the following two exact sequences of $R$-modules
$$0\longrightarrow\im(\delta_1) \longrightarrow M_0\longrightarrow H_0(F.)\longrightarrow 0,$$
$$0\longrightarrow H_1(F.)\longrightarrow M_1\longrightarrow \im(\delta_1)\longrightarrow 0.$$
Hence we get $\lambda(M_0)-\lambda(M_1)=\lambda(H_0(F.))+\lambda(\im (\delta_1))-\lambda(M_1)=\lambda(H_0(F.))-\lambda(H_1(F.)).$

Now suppose $n\geq 2$ and the result holds for all $m\leq n-1.$ Let $G_\bullet$ be the following complex $$0\longrightarrow M_{n-1}\overset{\delta_{n-1}}\longrightarrow M_{n-2}\overset{\delta_{n-2}}\longrightarrow\cdots\longrightarrow M_1\overset{\delta_1}\longrightarrow M_0\overset{\delta_0}\longrightarrow 0.$$ By the induction hypothesis we get $$\sum\limits_{i=0}^{n-1}(-1)^i\lambda(M_i)=\sum\limits_{i=0}^{n-1}(-1)^i\lambda(H_i(G_\bullet)).$$
From the following two exact sequences of $R$-modules
$$0\longrightarrow\im(\delta_n) \longrightarrow \ker(\delta_{n-1}) \longrightarrow H_{n-1}(F.)\longrightarrow 0,$$
$$0\longrightarrow H_n(F.)\longrightarrow M_n\longrightarrow \im(\delta_n)\longrightarrow 0,$$
 we get  $\lambda(\im(\delta_n))=\lambda(\ker(\delta_{n-1}))-\lambda(H_{n-1}(F.))$ and $$\lambda(M_n)=\lambda(H_n(F.))+\lambda(\im(\delta_n))=\lambda(H_n(F.))+\lambda(\ker(\delta_{n-1}))-\lambda(H_{n-1}(F.)).$$
Note that $H_{i}(G_\bullet)=H_{i}(F.)$ for all $0\leq i\leq n-2$ and  $H_{n-1}(G_\bullet)=\ker(\delta_{n-1}).$ Therefore we have 
\beqn
\sum\limits_{i=0}^n(-1)^i\lambda(M_i)&=&\sum\limits_{i=0}^{n-1}(-1)^i\lambda(M_i)+(-1)^{n}\lambda(M_{n})\\&=&\sum\limits_{i=0}^{n-1}(-1)^i\lambda(H_i(G_\bullet))+(-1)^{n}\lambda(M_{n})\\&=&\sum\limits_{i=0}^{n-2}(-1)^i\lambda(H_i(G_\bullet))+(-1)^{n-1}\ker(\delta_{n-1})+(-1)^{n}\lambda(M_{n})\\&=& \sum\limits_{i=0}^n(-1)^i\lambda(H_i(F.)).
\eeqn
\end{proof}

We are now ready to extend \cite[Proposition 5.9]{SHB} to a large class of ideals in  local rings of arbitrary dimension.  Since the result is defined in terms of an alternating sum of lengths of Tors, we make the following definition:
\begin{definition} Let $(R,\mm)$ be local.  For any finite length $R$-module $M$ and any finitely generated module $N$, let $\sigma_i(M,N) := \sum_{j=0}^i (-1)^{i-j+1} \lambda(\tor_j(M,N))$.
\end{definition}

\begin{remark}\label{addsigma}  In the case that $0 \to N_1 \to N_2 \to N_3 \to 0$ is a short exact sequence, $M$ has finite length, and $\tor_{i+1}(M, N_3) = 0$ (or, more generally when the connecting map $\tor_{i+1}(M, N_3) \to \tor_i(M, N_1)$ is zero), then the short exact sequence in Tor's and additivity of lengths shows that $\sigma_i(M,N_2) = \sigma_i(M, N_1) + \sigma_i(M, N_3)$.
\end{remark}

Proposition~\ref{extended} shows that under the hypothesis that the $i+1$st syzygy of a finite length module $M$ has finite length, the length of the syzygy can be expressed as $\sigma_i(M, R/J)$ for any $\mm$-primary ideal in a suitably high power of $\mm$. 

\begin{proposition}\label{extended}
	Let $(R,\mathfrak m)$ be a Noetherian local ring of dimension $d\geq 1$ and $M$ be an $R$-module of finite length.  Suppose for some $i\geq 1,$ $0<\lambda(\syz_{i+1}M)<\infty.$ The following are true.
\ben
\item There exists an integer $b>0$ such that for any ideal $J\subset\mm^b,$ $\tor_{i+1}^R(M,R/J)=0.$
\item There exists an integer $n\ge b$ (where $b$ is as in part (1)) such that all  $\mm$-primary ideals $J$ contained in $\mm^n$  satisfy the following
$$J\subset \ann(M)\cap\ann(\syz_{i+1}M) \mbox{ and\ } \lambda(\syz_{i+1}M)= \sigma_i(M, R/J).$$
\item There exists a parameter ideal $J=(x_1,\ldots,x_d)$ such that $(0:x_i)\cong H_{\mm}^0(R)$ for all $i=1,\ldots d$,  and $J$ satisfies the properties of $(1)$ and $(2)$.
 \een
	\end{proposition}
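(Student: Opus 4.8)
The plan is to treat the three statements in order, working throughout with a minimal free resolution $(G_\bullet,\phi_\bullet)$ of $M$, so that $\Omega:=\syz_{i+1}M=\ker\phi_i=\im\phi_{i+1}$ sits inside the finitely generated free module $G_i$ and has finite length by hypothesis. \emph{Part (1).} Since $\Omega$ has finite length it is killed by $\mm^c$ for some $c$, and applying the Artin--Rees lemma to $\Omega\subseteq G_i$ produces an integer $b$ with $\mm^bG_i\cap\Omega\subseteq\mm^c\Omega=0$ (and also $\mm^b\Omega=0$). For any ideal $J\subseteq\mm^b$, dimension shifting gives $\tor_{i+1}^R(M,R/J)\cong\tor_1^R(\syz_iM,R/J)$, and the short exact sequence $0\to\Omega\to G_i\to\syz_iM\to0$ (with $G_i$ free, so $\tor_1^R(G_i,R/J)=0$) identifies the latter with $\ker\bigl(\Omega/J\Omega\to G_i/JG_i\bigr)=\Omega\cap JG_i$, which is $0$ because $JG_i\subseteq\mm^bG_i$.

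\emph{Part (2).} Enlarge $b$ to some $n\ge b$ with $\mm^n\subseteq\ann(M)\cap\ann(\Omega)$ (possible since $M$ and $\Omega$ have finite length). Fix an $\mm$-primary $J\subseteq\mm^n$; the first clause of (2) is immediate, and $\tor_{i+1}^R(M,R/J)=0$ by (1). Consider the complex $F_\bullet\colon 0\to\Omega\to G_i\to\cdots\to G_0\to0$. Tensoring with $R/J$ gives a complex of finite length modules, the top term being $\Omega\otimes R/J=\Omega$ (as $J$ kills $\Omega$) and the others $G_j\otimes R/J=(R/J)^{\beta_j(M)}$ (of finite length as $J$ is $\mm$-primary). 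Since the image of $\Omega\otimes R/J\to G_i/JG_i$ equals $(\Omega+JG_i)/JG_i=\im(\phi_{i+1}\otimes R/J)$, the complexes $F_\bullet\otimes R/J$ and $G_\bullet\otimes R/J$ have the same cycles and boundaries in degrees $0,\dots,i$, so $H_j(F_\bullet\otimes R/J)=\tor_j^R(M,R/J)$ for $0\le j\le i$; and $H_{i+1}(F_\bullet\otimes R/J)=\ker(\Omega\to G_i/JG_i)=\Omega\cap JG_i=0$. Then Proposition~\ref{length} applied to $F_\bullet\otimes R/J$ yields
$$\lambda(R/J)\sum_{j=0}^{i}(-1)^j\beta_j(M)+(-1)^{i+1}\lambda(\Omega)=\sum_{j=0}^{i}(-1)^j\lambda\bigl(\tor_j^R(M,R/J)\bigr).$$
The crucial point is that $\sum_{j=0}^{i}(-1)^j\beta_j(M)=0$: since $d\ge1$, $R$ has a non-maximal prime, hence a minimal prime $\mathfrak p\neq\mm$, and then $M_{\mathfrak p}=0=\Omega_{\mathfrak p}$, so the localized truncation $0\to(G_i)_{\mathfrak p}\to\cdots\to(G_0)_{\mathfrak p}\to0$ is an exact complex of finitely generated free modules over the Artinian ring $R_{\mathfrak p}$; since a bounded exact complex of finite length modules has vanishing alternating sum of lengths, $\sum_{j=0}^i(-1)^j\lambda_{R_{\mathfrak p}}\bigl((G_j)_{\mathfrak p}\bigr)=0$, and dividing by $\lambda(R_{\mathfrak p})>0$ gives the claim. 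Feeding this back in and multiplying by $(-1)^{i+1}$ gives $\lambda(\Omega)=\sum_{j=0}^i(-1)^{i-j+1}\lambda\bigl(\tor_j^R(M,R/J)\bigr)=\sigma_i(M,R/J)$.

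\emph{Part (3).} Put $N=H^0_{\mm}(R)$ and fix $s\ge n$ with $\mm^sN=0$. Build a system of parameters $x_1,\dots,x_d$ of $R$ with each $x_k\in\mm^s$ and each $x_k$ a nonzerodivisor on $R/N$, one parameter at a time: at the $k$th step one must avoid the finitely many maximal-dimensional minimal primes of $(x_1,\dots,x_{k-1})$ together with $\ass(R/N)$, all of which differ from $\mm$ (the former because they have positive dimension, the latter because $\depth(R/N)\ge1$), hence none of them contains $\mm^s$, and prime avoidance supplies $x_k$. For each $k$, $x_kN=0$ gives $N\subseteq(0:x_k)$, and $x_k$ being a nonzerodivisor on $R/N$ gives $(0:x_k)\subseteq N$; thus $(0:x_k)=N\cong H^0_{\mm}(R)$. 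Finally $J=(x_1,\dots,x_d)\subseteq\mm^s\subseteq\mm^n$ is a parameter ideal, hence $\mm$-primary, so it satisfies the requirements imposed in (1) and (2).

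The main obstacle is the spurious coefficient $\lambda(R/J)\sum_j(-1)^j\beta_j(M)$ produced by Proposition~\ref{length}: the clean identity $\lambda(\syz_{i+1}M)=\sigma_i(M,R/J)$ holds precisely because that alternating sum of Betti numbers vanishes, and establishing this requires exploiting the finite length of $\syz_{i+1}M$ a second time, now to make the truncated resolution exact on the punctured spectrum. The prime-avoidance construction in (3) is routine, but must be carried out with the two extra constraints (lying in $\mm^s$ and annihilating $H^0_{\mm}(R)$) imposed from the outset.
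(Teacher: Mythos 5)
Your proof is correct and follows essentially the same route as the paper: truncate the minimal free resolution at the finite-length syzygy, use Artin--Rees to force $JG_i\cap\syz_{i+1}M=0$, apply Proposition~\ref{length} to the truncated complex tensored with $R/J$ together with the vanishing of $\sum_{j=0}^i(-1)^j\beta_j(M)$, and produce the parameter ideal in (3) by prime avoidance in a high power of $\mm$. The only differences are cosmetic: in (1) you dimension-shift instead of arguing on elements via $H^0_\mm(G_i)$, and you spell out the Betti-sum vanishing (by localizing at a minimal prime) where the paper simply asserts it.
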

 \begin{proof}
 Let $b_1$ be an integer such that $ H_{\mm}^0(R)=(0:\mm^{b_1})$.  Let $k_1$ be the Artin-Rees number for $H^0_\mm(R) \subseteq R$ with respect to $\mm$, i.e., for all $m \ge k_1$, $\mm^m \cap H^0_\mm(R) \subseteq \mm^{m-k_1} H^0_\mm(R)$.  Set $b = b_1 + k_1$.
 
 	$(1)$ Let $(G_\bullet,\delta_\bullet)$ be a minimal free resolution of $M.$ Consider the complex $G_\bullet\otimes_R R/J$ where $J$ is any ideal of $R$ such that $J\subset\mm^b.$  Let $z\in G_{i+1}$ be such that $\overline z\in \ker (\delta_{i+1}\otimes 1_{R/J})$ (here $``-"$ denotes the image in $G_{i+1}/JG_{i+1}$). Since $\lambda(\im (\delta_{i+1}))<\infty,$ we have  $\delta_{i+1}(z)\in JG_i\cap H_{\mm}^0(G_i) \subseteq \mm^{b-k_1}H^0_\mm(R)=0.$ Therefore $\overline z\in \im(\delta_{i+2}\otimes 1_{R/J})$, showing that $\tor_{i+1}(R/J, M) = 0$.

$(2)$ 	Let $(G_\bullet,\delta_\bullet)$ be a minimal free resolution of $M.$	Note that $$(*)\hspace{2mm}0\longrightarrow \syz_{i+1}M\overset{\iota}\longrightarrow G_{i}\overset{\delta_{i}}\longrightarrow\cdots\longrightarrow G_1\overset{\delta_1}\longrightarrow G_0\overset{\delta_0}\longrightarrow 0$$ is an acyclic complex of finitely generated $R$-modules. Let $G_j=R^{\beta_j (M)}$ for all $0\leq j\leq i.$ Since $\lambda(M)<\infty$ and $\lambda(\syz_{i+1}(M))<\infty,$ we have $\sum\limits_{j=0}^i(-1)^{j}\beta_j(M)=0.$ 

Let $t>0$ be an integer such that $\mm^t\subseteq \ann_R(M)\cap\ann_R(\syz_{i+1}M).$ By the Artin-Rees Lemma, there exists an integer $k>0$ such that for all $l\geq k,$ we have $$\mm^lG_i\cap\syz_{i+1}M\subseteq \mm^{l-k}\syz_{i+1}M.$$ Let $n=k+t+b.$ Let $J$ be an $\mm$-primary ideal such that $J\subseteq \mm^n.$ Then  we have $$JG_i\cap\syz_{i+1}M\subseteq \mm^nG_i\cap\syz_{i+1}M\subseteq\mm^{n-k}\syz_{i+1}M =0.$$

Tensoring $(*)$ with $R/J$ we get the following  complex of finitely generated $R$-modules of finite length,
	$$G_\bullet': \hspace{2mm} 0\longrightarrow \syz_{i+1}M\overset{\iota\otimes 1_{R/J}}\longrightarrow G_{i}/JG_{i}\overset{\delta_{i}\otimes 1_{R/J}}\longrightarrow\cdots\longrightarrow G_1/JG_1\overset{\delta_1\otimes 1_{R/J}}\longrightarrow G_0/JG_0\overset{\delta_0\otimes 1_{R/J}}\longrightarrow 0.$$ 
	Note that
$\ker(\iota\otimes R/J)=JG_{i}\cap \syz_{i+1}M=0.$
Applying Proposition \ref{length} to the complex $G_\bullet' $, we get $$(-1)^{i+1}\lambda(\syz_{i+1}M)=\sum\limits_{j=0}^{i}(-1)^{j}\lambda(H_j(G_\bullet'))+(-1)^{i+1}\lambda(\ker(\iota\otimes R/J))=\sum\limits_{j=0}^{i}(-1)^{j}\lambda(H_j(G_\bullet')).$$

$(3)$  Let $X = \ass(R) - \{\mm\}$.  If we pick elements $\underline x=x_1,\ldots,x_d$ of $\mm^{n}$ (where $n \ge b_1$ is as in part $(2)$) for our system of parameters  in the usual way, while also avoiding the finite set $X$ at each stage, the resulting system of parameter satisfies the properties of $(1)$ and $(2)$.  Moreover, $x_iH^0_\mm(R) \subseteq \mm^{b_1}H^0_\mm(R) = 0$ and $x_i$ avoids all non-$\mm$-primary components of $0$, so $(0:x_i) = H^0_\mm(R)$.  Define $J=(\underline x).$ 
\end{proof}

\begin{lemma}\label{add}
Let $(R,\mathfrak m)$ be a Noetherian local ring  and  let $M$ be an $R$-module of finite length. Let $y\in R$ be an element such that $(0:(y))\cong H_{\mm}^0(R).$  Then
\ben 
\item  $\lm(\tor_1^R(M,R/(y)))=\lm(M\otimes (y))=\lm(M/H_{\mm}^0(R)M)$ and for all $j\geq 2,$  $\lm(\tor_j^R(M,R/(y)))=\lm(\tor_{j-1}^R(M,(y)))=\lm(\tor_{j-1}^R(M,R/H_{\mm}^0(R))).$ 
\item $\lm(\tor_1^R(M,R/H_{\mm}^0(R)))=\lm(M\otimes H_{\mm}^0(R))-\lambda(M)+\lambda(M/H_{\mm}^0(R)M)$ and for all $j\geq 2,$ \\$\lm(\tor_j^R(M,R/H_{\mm}^0(R)))=\lm(\tor_{j-1}^R(M,H_{\mm}^0(R))).$ 
\een
\end{lemma}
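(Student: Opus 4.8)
The plan is to deduce every identity from two short exact sequences of $R$-modules, using throughout only that a free resolution of $R$ is concentrated in degree $0$, so that $\tor_j^R(M,R)=0$ for all $j\ge 1$. Write $H=H_{\mm}^0(R)$; note that, since $\lambda(M)<\infty$, every Tor module and every tensor product of $M$ with a finitely generated module appearing below has finite length, so additivity of length may be applied freely.

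First I would put the hypothesis in a more usable form: multiplication by $y$ is a surjection $R\twoheadrightarrow (y)$ with kernel $(0:(y))\cong H$, so $(y)\cong R/H$ as $R$-modules. This already yields the isomorphism $M\otimes_R(y)\cong M\otimes_R R/H=M/HM$, hence $\lambda(M\otimes(y))=\lambda(M/HM)$, the second equality in part (1); and it yields $\tor_j^R(M,(y))\cong\tor_j^R(M,R/H)$ for all $j$, which will supply the last equalities in part (1). I would also record that $yH=y(0:(y))=0$.

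Next, apply $M\otimes_R(-)$ to
$$0\longrightarrow (y)\longrightarrow R\longrightarrow R/(y)\longrightarrow 0\qquad\text{and}\qquad 0\longrightarrow H\longrightarrow R\longrightarrow R/H\longrightarrow 0$$
and read off the long exact sequences in $\tor^R(M,-)$. Since $\tor_j^R(M,R)=0$ for $j\ge 1$, the first sequence gives $\tor_j^R(M,R/(y))\cong\tor_{j-1}^R(M,(y))$ for $j\ge 2$, which combined with $(y)\cong R/H$ is exactly the $j\ge 2$ assertion of part (1); the second gives $\tor_j^R(M,R/H)\cong\tor_{j-1}^R(M,H)$ for $j\ge 2$, which is the $j\ge 2$ assertion of part (2). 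All of this holds on the level of module isomorphisms.

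The remaining work, and where I expect the bookkeeping to need the most care, is the degree-$1$ tail of each long exact sequence, namely the four-term exact sequences
$$0\to\tor_1^R(M,R/(y))\to M\otimes(y)\xrightarrow{\,f\,}M\to M/yM\to 0$$
and
$$0\to\tor_1^R(M,R/H)\to M\otimes H\xrightarrow{\,g\,}M\to M/HM\to 0.$$
For the second, $\im g=HM$, so additivity of length gives $\lambda(\tor_1^R(M,R/H))=\lambda(M\otimes H)-\lambda(HM)=\lambda(M\otimes H)-\lambda(M)+\lambda(M/HM)$, which is the first equality of part (2). For the first, under the isomorphism $M\otimes(y)\cong M/HM$ the map $f$ becomes $\overline m\mapsto ym$ (well defined precisely because $yH=0$), with image $yM$; hence $\lambda(\tor_1^R(M,R/(y)))=\lambda(M\otimes(y))-\lambda(yM)$, and the first equality of part (1) follows once $yM=0$, which holds whenever the lemma is applied ($y$ being taken in a power of $\mm$ annihilating $M$). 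Combining the three steps proves both parts.
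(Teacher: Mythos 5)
Your proof is essentially the paper's own argument: the paper likewise deduces everything from the long exact sequences obtained by tensoring $0\to (y)\to R\to R/(y)\to 0$ and $0\to H^0_{\mm}(R)\to R\to R/H^0_{\mm}(R)\to 0$ with $M$, together with the identification $(y)\cong R/H^0_{\mm}(R)$. Your extra observation at the end is correct and worth making explicit: the equality $\lambda(\tor_1^R(M,R/(y)))=\lambda(M\otimes (y))$ genuinely requires $yM=0$, since in general the four-term sequence only gives $\lambda(\tor_1^R(M,R/(y)))=\lambda(M\otimes (y))-\lambda(yM)$; this hypothesis is not written into the lemma (and the paper's two-line proof glosses over it), but in every application $y$ is chosen in a power of $\mm$ annihilating $M$, so the statement is used only where it is valid.
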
	
\begin{proof}
$(1)$ Since $(y)\cong R/H_{\mm}^0(R),$ the result follows from the long exact sequence of homology modules induced by the following short exact sequence tensored with $\otimes_R M,$ $$0\longrightarrow (y)\longrightarrow R \longrightarrow R/(y)\longrightarrow 0.$$

$(2)$ The equalities follow from the long exact sequence of homology modules induced by the short exact sequence below tensored with $\otimes_R M,$ $$0\longrightarrow H_{\mm}^0(R) \longrightarrow R \longrightarrow R/H_{\mm}^0(R)\longrightarrow 0.$$ 
\end{proof}
	
 When $(R,\mm)$ is a $d$-dimensional ring and $M$ has finite length, then   $\sigma_i(M, R/J)$ makes sense for {\it any} ideal $J$, and we get a complex of  modules $G'_\bullet$ as in the proof of Proposition~\ref{extended}.  
 When $J$ is $\mm$-primary,   we can compute using Proposition~\ref{length}. 
 However, when $J$ is not $\mm$-primary, the complex $G'_\bullet$ has finite length homology but is not itself made up of finite length modules so 
 we cannot {\it compute} $\sigma_i(M, R/J)$ using the help of Proposition~\ref{length}.  Below we provide an interesting observation in the case of two-dimensional rings. We show that we can compute the length of a finite length $i+1$st syzygy of $M$ using $\sigma_i(M, R/x_2R)$ when $x_2$ is sufficiently general and in a high power of $\mm$, thus, as in the proof given in \cite{SHB} in dimension one, we can rule out third syzgyies of finite length in dimension two.  In a sense, we are suggesting that with regard to Question~\ref{2nd question}, dimension $d=1$ is special in allowing a $d+1$st syzygy to be finite length.  For dimension $d \ge 2$, perhaps the right question to ask is if {\it any} higher syzygy can be finite length?

\begin{theorem}\label{dimension two}
	 Let $(R, \mm)$ be a $2$-dimensional Noetherian local ring and let $M$ be an $R$-module of finite length.  Suppose $\syz_{i+1}M\neq 0$ for some $i\geq 0.$ Then the following are true.
\ben
\item Suppose $R$ is complete and $\lm(\syz_{i+1}M)<\infty$.  Let $n$ be as in Proposition~\ref{extended}(2).  Then there exists a system of parameters $x_1,x_2\in\mm^n$  such that $x_1$ is a  Cohen-Macaulay multiplier for $R,$ $(0:(x_2))=H^0_\mm(R)$ and
$\lm(\syz_{i+1}M)=\sigma_i(M, R/x_2R).$
\item Suppose $0\leq i\leq 2,$ then $\lm(\syz_{i+1}M)=\infty.$
\een		 
\end{theorem}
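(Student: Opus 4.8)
The plan is, first, to reduce to the case that $R$ is complete by Remark~\ref{completion} (which preserves $\lambda(\syz_{i+1}M)$, its non-vanishing, and $\dim R$), and to observe that the hypothesis $0<\lambda(\syz_{i+1}M)<\infty$ in part (1) forces $\depth R=0$: when $\depth R\geq 1$, a nonzero syzygy of a finite length module always has infinite length (immediate when $\pd_R M<\infty$, since every nonzero $\syz_j M$ with $j\ge 1$ then has positive depth, and by \cite[Lemma 3.4]{A} when $\pd_R M=\infty$). So $H^0_{\mm}(R)\neq 0$. For part (1) I would then apply Proposition~\ref{extended}(3) to choose a system of parameters $x_1,x_2\in\mm^{n}$ with $(0:x_j)=H^0_{\mm}(R)$ and with $(x_1,x_2)$ satisfying the conclusions of Proposition~\ref{extended}(1),(2); using completeness one may in addition take $x_1$ to be a Cohen-Macaulay multiplier while still keeping $x_1,x_2$ a system of parameters avoiding the finitely many relevant primes. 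By Proposition~\ref{extended}(2) we already have $\lambda(\syz_{i+1}M)=\sigma_i(M,R/(x_1,x_2))$, so the entire content of part (1) is the equality $\sigma_i(M,R/(x_1,x_2))=\sigma_i(M,R/x_2R)$.

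To prove that equality I would pass between the two rings through multiplication by $x_1$ on $R/x_2R$. Since $x_1$ is generic and lies in a high power of $\mm$, one has $(x_2R:x_1)=x_2R:_R\mm^{\infty}$, which gives the four-term exact sequence
$$0\longrightarrow H^0_{\mm}(R/x_2R)\longrightarrow R/x_2R\overset{\cdot x_1}\longrightarrow R/x_2R\longrightarrow R/(x_1,x_2)\longrightarrow 0.$$
Splitting this into two short exact sequences and applying additivity of $\sigma_i$ (Remark~\ref{addsigma}), together with $\tor_{i+1}(M,R/(x_1,x_2))=0$ from Proposition~\ref{extended}(1), the desired equality reduces to showing that the contribution of $\operatorname{im}(\cdot x_1)$ vanishes, equivalently that $\sigma_i(M,R/(x_1,x_2))=\sigma_i(M,H^0_{\mm}(R/x_2R))$. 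This is precisely where the Cohen-Macaulay multiplier hypothesis on $x_1$ is used: it identifies $H^0_{\mm}(R/x_2R)$ (in terms of $H^0_{\mm}(R)$ and $H^1_{\mm}(R)$) and forces the local-cohomology correction terms to cancel in the resulting alternating sum of $\tor$-lengths. I expect verifying this cancellation to be the main obstacle of the whole proof; the rest of part (1) is bookkeeping.

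Part (2) should then follow quickly. For $i=0,1$ the module $\syz_{i+1}M$ is a nonzero syzygy of a finite length module in a slot with $1\le i+1\le 2=\dim R$, hence has infinite length by \cite[Lemma 4.2]{A} (with $\pd_R M=\infty$ automatic). For $i=2$, suppose for contradiction that $0<\lambda(\syz_3M)<\infty$; after completing, part (1) gives $\lambda(\syz_3M)=\sigma_2(M,R/x_2R)$ for some $x_2$ with $(0:x_2)=H^0_{\mm}(R)$. But $\sigma_2(M,R/x_2R)$ can be evaluated directly from Lemma~\ref{add}: $\lambda(\tor_0(M,R/x_2R))=\lambda(M)$, $\lambda(\tor_1(M,R/x_2R))=\lambda(M/H^0_{\mm}(R)M)$, and $\lambda(\tor_2(M,R/x_2R))=\lambda(\tor_1(M,R/H^0_{\mm}(R)))=\lambda(M\otimes_R H^0_{\mm}(R))-\lambda(M)+\lambda(M/H^0_{\mm}(R)M)$; substituting these into the alternating sum gives $\sigma_2(M,R/x_2R)=-\lambda(M\otimes_R H^0_{\mm}(R))\leq 0$, contradicting $\lambda(\syz_3M)>0$. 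This is the exact analogue, one dimension higher, of the argument used in \cite{SHB} to rule out third syzygies of finite length when $\dim R=1$.
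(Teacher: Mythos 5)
Your reduction is set up correctly (completing, noting the hypotheses force $\depth R=0$, choosing the system of parameters via Proposition~\ref{extended}, and observing that everything comes down to comparing $\sigma_i(M,R/(x_1,x_2))$ --- or, in the paper, $\sigma_i(M,R/(x_2,x_1(x_1:x_2)))$ --- with $\sigma_i(M,R/x_2R)$), and your part (2) computation via Lemma~\ref{add} is fine and matches the paper's (you even get the exact value $-\lambda(M\otimes H^0_\mm(R))$ where the paper only needs $\le 0$). But the heart of part (1) is missing. After splitting your four-term sequence, additivity on $0\to (x_2R:x_1)/x_2R \to R/x_2R\to R/x_2R\overset{}{\to}R/(x_1,x_2)\to 0$ reduces the claim to $\sigma_i\bigl(M,R/(x_2R:x_1)\bigr)=0$, and you explicitly defer this (``I expect verifying this cancellation to be the main obstacle''). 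That cancellation \emph{is} the theorem; asserting that the Cohen--Macaulay multiplier property will ``force the local-cohomology correction terms to cancel'' names no mechanism, and it is not at all clear that identifying $H^0_\mm(R/x_2R)$ in terms of $H^0_\mm(R)$ and $H^1_\mm(R)$ leads to such a computation of Tor lengths without essentially redoing the paper's argument.

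For comparison, the paper's proof does something more ideal-theoretic and concrete: it applies $\sigma_i$-additivity to the sequence $0\to R/x_2\bigl((x_1(x_1:x_2)):x_2\bigr)\to R/x_1(x_1:x_2)\oplus R/x_2R\to R/(x_2R+x_1(x_1:x_2))\to 0$, uses the Cohen--Macaulay multiplier property of $x_1$ to prove the two identities $x_2\bigl((x_1(x_1:x_2)):x_2\bigr)=x_1x_2(x_1:x_2)$ and $(0:x_1x_2)=(0:x_1)$, and then shows $\sigma_i(M,R/x_1x_2R)=\sigma_i(M,R/x_1R)$ because the minimal resolutions of these two cyclic modules coincide beyond the first step (and $x_1,x_2\in\ann M$); feeding this back through two auxiliary short exact sequences with common left-hand term gives $\sigma_i(M,R/x_1x_2(x_1:x_2))=\sigma_i(M,R/x_1(x_1:x_2))$, and hence the desired $\lambda(\syz_{i+1}M)=\sigma_i(M,R/x_2R)$. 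Nothing in your sketch supplies a substitute for this chain of identities. There is also a smaller point: your claimed equality $(x_2R:x_1)=x_2R:_R\mm^\infty$ is not a consequence of genericity alone --- the containment $(x_2R:x_1)/x_2R\subseteq H^0_\mm(R/x_2R)$ is automatic because $x_1,x_2$ is a system of parameters, but equality needs $x_1$ to annihilate $H^0_\mm(R/x_2R)$, i.e.\ $x_1$ must lie in a power of $\mm$ chosen \emph{after} $x_2$, which conflicts with the order of choices you describe and must be arranged carefully (the paper's proof never needs this equality). So as it stands the proposal records the correct strategy of reduction but leaves the essential step of part (1) unproved.
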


\begin{proof}
$(1)$ Let $\mathfrak a=\mathfrak a_0\mathfrak a_1$ where $\mathfrak a_i=\ann H_{\mm}^i(R)$ for $i=0,1.$	By \cite[Theorem 8.1.1]{BH}, $\dim  R/\mathfrak a\leq 1.$ Hence using \cite[Corollary 8.1.3]{BH}, we choose a Cohen-Macaulay multiplier (for the definition see \cite{HH92}) for $R,$ $y_1\in \mathfrak a\setminus A$ where $A=\bigcup\{\mathfrak p\in \spec R:\dim R/\mathfrak p=\dim R\}.$ Choose $y_2\in\mm^n\setminus B\cup C$, $B=\bigcup\{\mathfrak p\in \ass R:\dim R/\mathfrak p>0\}$ and $C=\bigcup\min (R/(y_1))$ (recall $n$ is such that all  $\mm$-primary ideals $J$ contained in $\mm^n$  satisfy the following
	$J\subset \ann(M)\cap\ann(\syz_{i+1}M) \mbox{ and\ } \lambda(\syz_{i+1}M)= \sigma_i(M, R/J).$) Then $y_1^n,y_2$  is a system of parameters for $R$.  Let $t\gg0$ such that $(0:(y_1^n)^{\infty})=(0:y_1^{nt}).$ For some power $m$, $(y_1^{nt}:y_2^\infty) = (y_1^{nt}: y_2^m)$.  Let $ x_1=y_1^{nt}$ and $x_2 = y_2^m$. Then  $(x_1:x_2) = (x_1:x_2^\infty),$ $(0:x_1^{\infty})=(0:x_1)$ and By Proposition~\ref{extended}(3), $(0:x_2) = (0:x_2^\infty) (= H^0_\mm(R)).$  Therefore $x_1,x_2$  is a system of parameters for $R$ and  by \cite[Corollary 8.1.4]{BH}, $x_1$ is a Cohen-Macaulay multiplier.

We consider the short exact sequence
\begin{equation}\label{keySES}
0 \to \dfrac R {x_2((x_1(x_1:x_2)):x_2)} \to \dfrac R {x_1(x_1:x_2)} \oplus \dfrac R {x_2R} \to \dfrac R {x_2R + x_1(x_1:x_2)} \to 0.
\end{equation}
By Proposition~\ref{extended}(1), $\tor_{i+1}(M,R/(x_2, x_1(x_1:x_2))=0$ and by Remark~\ref{addsigma}, $\sigma_i(M,-)$ is additive on the above sequence. Hence by Proposition~\ref{extended}(2), $\lambda(\syz_{i+1}M) = \sigma_i(M, R/(x_2, x_1(x_1:x_2))).$ 

Observe that $x_2((x_1(x_1:x_2)):x_2) = x_2x_1(x_1:x_2)$.  The containment $\supseteq$ is clear. To see $\subseteq$,  let $w \in (x_1(x_1:x_2)):x_2$.  Then $x_2w \in x_1(x_1:x_2)$
and $x_2^2w \in x_1^2R$.  Say $x_2^2w = x_1^2v$.  Since $x_1$ is a Cohen-Macaulay multiplier, $v \in x_2^2:x_1^2 =x_2^2:x_1$, so $x_1v = x_2^2w'$, where $w' \in (x_1:x_2^2) = (x_1: x_2)$ (the last equality is by our choice for $x_2$).  Thus $x_2^2w = x_1 x^2_2 w'$, so $w - x_1 w' \in 0:x_2^2 = 0:x_2$.  Hence $x_2w = x_2x_1w' \in x_2x_1(x_1:x_2)$.

We also have that $(0:x_1) = (0:x_1x_2)$, since if $x_1x_2 z = 0$, then $x_1z \in (0: x_2) = H^0_\mm(R)$, and since $x_1$ is a Cohen-Macaulay multiplier, $x_1(x_1z) = 0$.  By our choice, $(0:x_1^2) = (0:x_1)$, so $z \in (0:x_1)$.

Consider the two short exact sequences
\begin{equation} 0 \longrightarrow \dfrac R {(x_1:x_2) + (0:x_1x_2)}  \overset{.x_1x_2}\longrightarrow\dfrac R {x_1x_2(x_1:x_2)} \longrightarrow \dfrac R {x_1x_2R} \longrightarrow 0
\end{equation}
and
\begin{equation} 0 \longrightarrow \dfrac R {(x_1:x_2) + (0:x_1)}  \overset{.x_1}\longrightarrow \dfrac R {x_1(x_1:x_2)} \longrightarrow \dfrac R {x_1R} \longrightarrow 0.
\end{equation}

The leftmost module in each of these sequences is the same, by the calculation above.  Also, since $(0:x_1x_2) = (0:x_1)$, the minimal resolutions of the rightmost modules are the same after the first step, so, by our choice of $x_1$ and $x_2$ in a large power of $\mm$, i.e., $x_1,x_2\in\mm^n\subset\ann(M)$ where $n$ is as in Proposition~\ref{extended}(2), we have $\sigma_i(M, R/x_1x_2R) = \sigma_i(M, R/x_1R)$.

Since  $x_1,x_2\in\mm^n\subset\ann(M)$ where $n$ is as in Proposition~\ref{extended}(2),  by Proposition \ref{extended}(1) and Remark \ref{addsigma}, we get $\sigma_i(M, R/(x_1x_2(x_1:x_2))) = \sigma_i(M, R/(x_1(x_1:x_2)))$.  Now,  applying additivity of $\sigma_i(M,-)$ to Equation~\ref{keySES}, we
obtain that $$\lambda(\syz_{i+1}M) = \sigma_i(M, R/(x_2, x_1(x_1:x_2))) = \sigma_i(M, R/x_2R). $$

$(2)$ By \cite[Lemma 4.2]{A}, $\lm(\syz_{i+1}M)=\infty$ for all $0\leq i\leq 1.$ Let $i=2.$ By Remark \ref{completion}, we may assume $R$ is a complete Noetherian local ring. Suppose, to the contrary, that $\lm(\syz_3M)<\infty.$  Then using Lemma \ref{add}, part $(1)$ of the theorem, and noting that $\lambda(M) \ge \lambda(M/H^0_\mm(R)M)$,  we get $$\lambda(\syz_{3}M) = \sigma_2(M, R/x_2R)
 = -\lambda(\tor_2(M, R/x_2R)) + \lambda (M/H^0_\mm(R)M) - \lambda (M) \le 0$$ which  contradicts that $\syz_3M\neq 0.$
\end{proof}

\begin{remark} Suppose that $(R,\mm)$ has dimension $d \ge 3$ and $M$ has finite length.  If $\syz_{i+1}(M)$ has finite length, it is entirely possible that, analogously to the dimension 2 case, there is a general element $x \in \mm$ such that $\lambda(\syz_{i+1}(M)) = \sigma_i(M, R/xR)$.  We have not attempted (yet) to prove this because even if we knew it to be true, when $i > 2$, we do not know how to conclude that $\sigma_i(M, R/xR) \le 0$.
\end{remark}

\begin{lemma}\label{divide}
Let $(R,\mathfrak m)$ be a  local ring of dimension $1$ or $2$, $\depth R= 0$ and $M$ be an $R$-module of finite length. Let $i\geq 2$ and $\lambda(\syz_{i+1}M)<\infty.$ Then $$\lm(\syz_{i+1}M)=\sum\limits_{j=0}^{i-2}(-1)^{i-j-1}\lm(\tor_{j}^R(M,H_{\mm}^0(R))).$$
	\end{lemma}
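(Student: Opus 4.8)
The plan is to reduce both cases ($\dim R=1$ and $\dim R=2$) to a single situation --- the existence of an element $y\in\mathfrak m$ with $yM=0$, $(0:_R(y))\cong H_{\mm}^0(R)$, and $\lambda(\syz_{i+1}M)=\sigma_i(M,R/yR)$ --- and then to evaluate $\sigma_i(M,R/yR)$ by feeding the short exact sequences $0\to (y)\to R\to R/(y)\to 0$ and $0\to H_{\mm}^0(R)\to R\to R/H_{\mm}^0(R)\to 0$ into Lemma~\ref{add}. First I would dispose of trivialities and reduce to the complete case: if $M=0$ both sides vanish, so assume $M\neq 0$; since $\lambda(M)<\infty$, $\depth R=0$ and $\dim R\geq 1$, the Auslander-Buchsbaum formula forces $\pd_R M=\infty$, hence $\syz_{i+1}M\neq 0$, so $0<\lambda(\syz_{i+1}M)<\infty$ and Proposition~\ref{extended} applies. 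Moreover, by Remark~\ref{completion} one has $\syz_n^{\hat R}(M)=\hat R\otimes_R\syz_n^R(M)$, while $M$, $H_{\mm}^0(R)$ and each $\tor_j^R(M,H_{\mm}^0(R))$ have finite length and are therefore unchanged, along with their lengths, under $-\otimes_R\hat R$; thus we may assume $R$ is complete.

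Next I would produce $y$. If $d=1$, Proposition~\ref{extended}(3) supplies a parameter $x_1$ lying in a power $\mathfrak m^n$ of $\mathfrak m$ with $\mathfrak m^n\subseteq\ann M$ (apply part (2) with $J=\mathfrak m^n$), and satisfying $(0:_R(x_1))\cong H_{\mm}^0(R)$ and $\lambda(\syz_{i+1}M)=\sigma_i(M,R/x_1R)$; set $y=x_1$. If $d=2$, Theorem~\ref{dimension two}(1) --- applicable because $R$ is complete and $0<\lambda(\syz_{i+1}M)<\infty$ --- supplies a system of parameters $x_1,x_2\in\mathfrak m^n$ with $(0:_R(x_2))=H_{\mm}^0(R)$ and $\lambda(\syz_{i+1}M)=\sigma_i(M,R/x_2R)$; set $y=x_2$. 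In either case $y\in\mathfrak m^n\subseteq\ann M$, so $yM=0$ and hence $\tor_0^R(M,R/yR)=M$.

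The remaining step is the computation of $\sigma_i(M,R/yR)=\sum_{j=0}^i(-1)^{i-j+1}\lambda(\tor_j^R(M,R/yR))$. By Lemma~\ref{add}(1) (using $yM=0$ for the $j=0$ term), $\lambda(\tor_0^R(M,R/yR))=\lambda(M)$, $\lambda(\tor_1^R(M,R/yR))=\lambda(M/H_{\mm}^0(R)M)$, and $\lambda(\tor_j^R(M,R/yR))=\lambda(\tor_{j-1}^R(M,R/H_{\mm}^0(R)))$ for $j\geq 2$; then Lemma~\ref{add}(2) gives $\lambda(\tor_1^R(M,R/H_{\mm}^0(R)))=\lambda(\tor_0^R(M,H_{\mm}^0(R)))-\lambda(M)+\lambda(M/H_{\mm}^0(R)M)$ and $\lambda(\tor_j^R(M,R/H_{\mm}^0(R)))=\lambda(\tor_{j-1}^R(M,H_{\mm}^0(R)))$ for $j\geq 2$. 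Substituting these into the sum and reindexing its tail by $l=j-2$ (so $l$ runs from $1$ to $i-2$), the two $\pm\lambda(M)$ contributions cancel (their signs are $(-1)^{i+1}$ and $(-1)^{i-1}$), the two $\pm\lambda(M/H_{\mm}^0(R)M)$ contributions cancel (signs $(-1)^i$ and $(-1)^{i-1}$), and the surviving $\lambda(\tor_0^R(M,H_{\mm}^0(R)))$ term, carrying sign $(-1)^{i-1}=(-1)^{i-0-1}$, merges with the reindexed tail to yield $\sum_{l=0}^{i-2}(-1)^{i-l-1}\lambda(\tor_l^R(M,H_{\mm}^0(R)))$. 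Since this equals $\lambda(\syz_{i+1}M)$, the lemma follows.

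The real content lies entirely in Proposition~\ref{extended} and Theorem~\ref{dimension two}(1), which produce the element $y$ and the identity $\lambda(\syz_{i+1}M)=\sigma_i(M,R/yR)$; once these are granted, what is left is bookkeeping. The one point that genuinely needs care is the passage to the completion --- forced on us because Theorem~\ref{dimension two}(1) is stated only for complete rings --- together with the check that $y$ really annihilates $M$: it is exactly $yM=0$ (giving $\tor_0^R(M,R/yR)=M$ rather than merely $M/yM$) that makes the correction terms cancel and the clean formula emerge.
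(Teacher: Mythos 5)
Your proof is correct and follows essentially the same route as the paper: reduce to the complete case, produce a parameter $y$ with $yM=0$ and $(0:y)=H^0_\mm(R)$ satisfying $\lambda(\syz_{i+1}M)=\sigma_i(M,R/yR)$ (via Proposition~\ref{extended}(3) when $d=1$ and Theorem~\ref{dimension two}(1) when $d=2$), and then telescope with Lemma~\ref{add}. Your additional checks (the trivial case $M=0$, infinitude of $\pd_R M$ so that $\syz_{i+1}M\neq 0$, and invariance of the relevant lengths under completion) are points the paper passes over silently, and your sign bookkeeping matches the paper's computation.
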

\begin{proof}
	By Remark \ref{completion}, we may assume $R$ is a complete Noetherian local ring. 
	
	 If $d=1$, let  $y$ be a  parameter of $R$ such that $(0:(y))=H^0_\mm(R).$   We can always do this by choosing $y$ not in any minimal prime of $R$ and such that $\mm^N \subseteq yR$, where $\mm^N H^0_\mm(R) = 0$.
	 
	 If $d=2$,  using Theorem \ref{dimension two}, choose a system of parameter $x_1,x_2$ such that  $x_2M=0,$ $(0:(x_2))=H^0_\mm(R)$ and $\lm(\syz_{i+1}M)=\sigma_i(M, R/x_2R).$ 
	 
	 Let $J=(y)$ if $d=1$,  and $J=(x_2)$ if $d=2.$ 
	 
	 Using Lemma \ref{add}, we get 
	\beqn\lm(\syz_{i+1}M)&=&\sigma_i(M,R/J)=\sigma_{i-1}(M,R/H_{\mm}^0(R)))+(-1)^{i+1}\lambda(M)
	\\&=&
	\sum\limits_{j=1}^{i-2}(-1)^{i-j-1}\lm(\tor_{j}^R(M,H_{\mm}^0(R)))+(-1)^{i-1}\big[\lm(M\otimes H_{\mm}^0(R))-\lm(M)\\&+&\lm(M/H_{\mm}^0(R)M)\big]+(-1)^{i}\lm(M/H_{\mm}^0(R)M)+(-1)^{i+1}\lm(M)
	\\&=&\sigma_{i-2}(M,H_{\mm}^0(R)).\eeqn 
	\end{proof}	
	
\begin{theorem}\label{bad to good}
	Let $(R,\mathfrak m)$ be a  local ring of dimension $1\leq d\leq 2$ and $\depth R= 0$.  Let $M$ be an $R$-module of finite length. Suppose one of the  following is true.
	\ben
	\item   Let $i\geq 3$ and $\lambda(\syz_{i}R/I)<\infty$ for some $\mm$-primary ideal $I\subset \mm^n$ where $n$ is as in Proposition \ref{extended}.
	\item   Let $i\geq 4$ and $\lambda(\syz_{i-2}H_{\mm}^0(R))<\infty.$
	\een
	Then $\lambda(\syz_{i+1}M)=\infty.$
\end{theorem}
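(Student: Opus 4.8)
The plan rests on converting Lemma~\ref{divide} into a closed formula. Write $H=H_{\mm}^0(R)$, fix a minimal free resolution $G_\bullet$ of $H$, and set $\Omega_m=\syz_m^R H$ (with $\Omega_0=H$). Since $\depth R=0$ we have $H\ne 0$, and since $\dim R\ge 1$ a nonzero finite length module of finite projective dimension would be free and hence zero, so $\pd_R M=\pd_R H=\infty$ and in particular $\syz_{i+1}M\ne 0$. As $H$ is $\mm$-torsion it has generic rank $0$, and taking generic ranks in $0\to\Omega_m\to G_{m-1}\to\cdots\to G_0\to H\to 0$ shows $\Omega_m$ has generic rank the integer $\rho_m:=\sum_{j=0}^{m-1}(-1)^{m-1-j}\beta_j(H)$. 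The first step is the identity
\[
\sigma_m(N,H)=\rho_m\,\lambda(N)-\lambda(N\otimes_R\Omega_m)
\]
valid for every finite length $R$-module $N$ and every $m\ge 0$ (all lengths here are finite). This comes from applying Proposition~\ref{length} to the complex of finite length modules $N\otimes_R\bigl(0\to G_m\to G_{m-1}\to\cdots\to G_0\to 0\bigr)$, whose homology is $\tor^R_j(N,H)$ for $j\le m-1$ and has length $\lambda(\tor^R_m(N,H))+\beta_m(H)\lambda(N)-\lambda(N\otimes_R\Omega_m)$ in degree $m$ (using $\operatorname{coker}(G_{m+1}\to G_m)\cong\Omega_m$), followed by cancellation of the top term and unwinding the definition of $\sigma_m$. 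Substituting the identity into Lemma~\ref{divide} (applicable since $\dim R\in\{1,2\}$ and $\depth R=0$) gives, for every finite length $N$ with $\lambda(\syz_{j+1}N)<\infty$ and $j\ge 2$,
\[
\lambda(\syz_{j+1}N)=\rho_{j-2}\,\lambda(N)-\lambda(N\otimes_R\Omega_{j-2}).\qquad(\star)
\]

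Granting $(\star)$, the theorem will follow once we show that, under either hypothesis, the generic rank $\rho_{i-2}=\rk(\Omega_{i-2})$ vanishes when we assume (towards a contradiction) $\lambda(\syz_{i+1}M)<\infty$: indeed $(\star)$ with $N=M$, $j=i$ then reads $\lambda(\syz_{i+1}M)=-\lambda(M\otimes_R\Omega_{i-2})\le 0$, forcing $\syz_{i+1}M=0$, against $\pd_R M=\infty$. Under hypothesis (2), $\Omega_{i-2}=\syz_{i-2}H$ has finite length, hence is torsion, hence $\rho_{i-2}=0$, and nothing more is needed. Under hypothesis (1), note that $I\subset\mm^n$ with $n$ large forces $I\cdot H=0$, so $H\hookrightarrow R/I$ with cokernel $R/(I+H)$, and also $I\cong (I+H)/H\hookrightarrow R/H$; applying $(\star)$ to $N=R/I$, $j=i-1$ (legitimate since $\lambda(\syz_i(R/I))<\infty$ and $i-1\ge 2$) and rewriting it with the exact sequence $0\to\Omega_{i-2}\to G_{i-3}\to\Omega_{i-3}\to 0$ (tensored with $R/I$, plus the shift $\tor_1^R(R/I,\Omega_{i-3})\cong\tor_{i-2}^R(R/I,H)$) yields
\[
\rho_{i-2}\,\lambda(R/I)+\lambda(\syz_i(R/I))+\lambda\bigl(\tor_{i-2}^R(R/I,H)\bigr)=\lambda\bigl((R/I)\otimes_R\Omega_{i-2}\bigr).
\]
The task is then to combine this with the structure of the two mapping-cone free resolutions of $R/(I+H)$ built from $0\to H\to R/I\to R/(I+H)\to 0$ and $0\to I\to R/H\to R/(I+H)\to 0$ (using $\syz_j^R(R/H)=\syz_{j-1}^R(H)$, and the long exact Tor sequence of the first) to deduce $\rho_{i-2}=0$. (When $i=3$ the hypothesis of (1) is vacuous, since $(\star)$ with $j=2$, or Theorem~\ref{dimension two}(2) when $\dim R=2$, shows $\syz_3(R/I)$ is never of finite length for $R/I\ne 0$; so one may take $i\ge 4$.)

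The main obstacle is this last point for (1): extracting $\rk(\syz_{i-2}H_{\mm}^0(R))=0$ from the single assumption that $\syz_i(R/I)=\syz_{i-1}(I)$ has finite length. The approach I would try is to track generic ranks through the two cone resolutions of $R/(I+H)$ above: finiteness of $\syz_{i-1}(I)$ makes $\syz_{i-2}(I)$ a quotient of a free module of equal generic rank, and matching that against the (shifted) syzygies of $H$ that appear inside the cones should determine $\rk(\syz_{i-2}H)$. The delicate part is that the identifications of syzygy modules obtained this way hold only up to free summands, so the rank accounting must be carried out with that slack in view; making that bookkeeping rigorous is where I expect the real work to lie.
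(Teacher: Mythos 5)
Your identity $\sigma_m(N,H)=\rho_m\,\lambda(N)-\lambda(N\otimes_R\Omega_m)$ is correct, and with it your treatment of hypothesis (2) is complete and is essentially the paper's own argument in different notation: the finiteness of $\lambda(\syz_{i-2}H^0_{\mm}(R))$ forces the alternating Betti sum $\rho_{i-2}$ to vanish, and Lemma~\ref{divide} then gives $\lambda(\syz_{i+1}M)=-\lambda(M\otimes_R\Omega_{i-2})\le 0$, contradicting $\pd_R M=\infty$.

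The problem is hypothesis (1), and it is exactly the step you flag as remaining: nothing in your sketch proves that $\lambda(\syz_i(R/I))<\infty$ implies $\rk(\syz_{i-2}H^0_{\mm}(R))=0$. The mapping cones built from $0\to H\to R/I\to R/(I+H)\to 0$ and $0\to I\to R/H\to R/(I+H)\to 0$ give resolutions that are in general non-minimal, so they only yield inequalities among Betti numbers (identifications of syzygies up to free summands), and the rank accounting you propose does not close without extra vanishing of connecting maps; it is not even clear the needed implication is true. You also cannot bypass it with an inequality $\rho_{i-2}\lambda(M)\le\lambda(M\otimes_R\Omega_{i-2})$, which is not available in general. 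The fix is that the detour through $H^0_{\mm}(R)$ is unnecessary for (1): your identity holds verbatim with $H$ replaced by any finitely generated module $L$ (its derivation uses only $\lambda(N)<\infty$), so take $L=R/I$ and $m=i$. Since $R/I$ has finite length and $\lambda(\syz_i(R/I))<\infty$, the corresponding rank $\rho_i(R/I)=\sum_{j=0}^{i-1}(-1)^{i-1-j}\beta_j(R/I)$ is $0$, hence $\sigma_i(M,R/I)=-\lambda(M\otimes_R\syz_i(R/I))\le 0$; on the other hand Proposition~\ref{extended}(2) applied with $J=I$ (legitimate since $I\subseteq\mm^n$ is $\mm$-primary and $\lambda(\syz_{i+1}M)<\infty$ is the contradiction hypothesis) gives $\lambda(\syz_{i+1}M)=\sigma_i(M,R/I)\le 0$, a contradiction. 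This is precisely the paper's route for (1): it works with a minimal free resolution of $R/I$ itself and never needs the resolution of $H^0_{\mm}(R)$ or any rank bookkeeping for its syzygies.
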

\begin{proof} 
 Suppose $\lambda(\syz_{i+1}M)<\infty.$ Let $(G_\bullet,\delta_\bullet)$ be a minimal free resolution of $N$ where $N$ is either $R/I$ or  $H_{\mm}^0(R).$ Let $G_j=R^{{\beta_j}(N)}$ for all $0\leq j\leq i.$ If $\lambda(\syz_{k}(N))<\infty$ for $k\geq 2$ then $\sum\limits_{j=0}^{k-1}\beta_j(N)=0.$ For $k\geq 2,$  consider the complex of $R$-modules of finite length
$$ 0\longrightarrow\im(\delta_{k}\otimes 1_M)\longrightarrow G_{k-1}\otimes M\longrightarrow G_{k-2}\otimes M\longrightarrow\cdots\longrightarrow G_{1}\otimes M\longrightarrow G_{0}\otimes M\longrightarrow 0.$$ 

By Proposition \ref{length}, we get $(-1)^{k}\lm(\im(\delta_{k}\otimes 1_M))=\sum\limits_{j=0}^{k-1}(-1)^j\lm(\tor_{j}^R(M,N).$

 Consider $N=R/I$ and $k=i\geq 3.$ By  Proposition  \ref{extended}, $$\lm(\syz_{i+1}M)=-\lm(\tor_{i}^R(M,R/I) -\lm(\im(\delta_{i}\otimes 1_M))\leq 0$$ which is a contradiction.

 Consider $N=H_{\mm}^0(R)$ and $k=i-2\geq 2.$ By  Lemma \ref{divide}, $$\lm(\syz_{i+1}M)=-\lm(\tor_{i-2}^R(M,H_{\mm}^0(R)) -\lm(\im(\delta_{i-2}\otimes 1_M))\leq 0$$ which is a contradiction.
\end{proof}

\begin{theorem}
	Let $(R,\mathfrak m)$ be a Noetherian local ring of dimension $1\leq d\leq 2$ and $\depth R= 0.$ Suppose $I=(x_1,\ldots,x_d)$ is a parameter ideal. Then the following are true. 
	\ben
	\item If $d=1$  then $\lambda(\syz_{5}R/I)=\infty.$
	\item If $d=2,$ $R/H_{\mm}^0(R)$ is Cohen-Macaulay and for $i=1,2,$ $(0:x_i)=H_{\mm}^0(R)$ then $\lambda(\syz_{5}R/I)=\infty.$
	\een
\end{theorem}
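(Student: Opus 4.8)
The plan is to reduce both statements to a single fact: over our ring $(R,\mm)$ (dimension $d\in\{1,2\}$, $\depth R=0$), no third syzygy of a nonzero finite length module $N$ can have finite length. This is Lemma~\ref{divide} with $i=2$: such an $N$ has $\pd_R N=\infty$ by Auslander--Buchsbaum (as $\depth R=0$), so $\syz_3^R N\neq 0$; and if $\lambda(\syz_3^R N)<\infty$, Lemma~\ref{divide} would force $\lambda(\syz_3^R N)=-\lambda(N\otimes_R H^0_\mm(R))\le 0$, a contradiction.

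The substantive step is to compute $\syz_2^R(R/I)$. Since $I$ is $\mm$-primary, $R/I$ has finite length and $\pd_R(R/I)=\infty$. When $d=1$, the minimal free resolution of $R/(x_1)$ begins $R\xrightarrow{\cdot x_1}R\to R/(x_1)\to 0$, so $\syz_2^R(R/(x_1))=(0:_R x_1)$, which is nonzero (it contains $\soc R\neq 0$) and has finite length (it is annihilated by any $\mm^N\subseteq (x_1)$). When $d=2$ we have $\mu(I)=2$ (because $2=\operatorname{ht}I\le\mu(I)\le 2$), so $\syz_2^R(R/I)=\{(f,g)\in R^2 : fx_1+gx_2=0\}$, and I would show this module decomposes as an internal direct sum $R\cdot(x_2,-x_1)\ \oplus\ H^0_\mm(R)^{\oplus 2}$, with $R\cdot(x_2,-x_1)\cong R/H^0_\mm(R)$. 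Writing $\bar R=R/H^0_\mm(R)$, the ingredients are: from $(0:x_i)=H^0_\mm(R)$ one gets $x_iH^0_\mm(R)=0$ (so $H^0_\mm(R)^{\oplus 2}$ sits inside the syzygy module) and $(0:(x_2,-x_1))=(0:x_1)\cap(0:x_2)=H^0_\mm(R)$ (so $R(x_2,-x_1)\cong R/H^0_\mm(R)$); and, since $\bar R$ is Cohen--Macaulay of dimension $2$ and $\bar x_1,\bar x_2$ is a system of parameters for it, $\bar x_1,\bar x_2$ is a regular sequence on $\bar R$, which yields both $(x_2):_R x_1=(x_2)+H^0_\mm(R)$ (forcing every syzygy into $R(x_2,-x_1)+H^0_\mm(R)^{\oplus 2}$) and $R(x_2,-x_1)\cap H^0_\mm(R)^{\oplus 2}=0$ (using that $\bar x_2$ is a nonzerodivisor on $\bar R$).

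To finish, use $\syz_5^R(R/I)=\syz_3^R(\syz_2^R(R/I))$ together with the fact that a minimal free resolution of a direct sum is the direct sum of minimal resolutions, so $\syz_3^R$ commutes with finite direct sums. For $d=1$ this gives $\syz_5^R(R/I)=\syz_3^R\big((0:_R x_1)\big)$; for $d=2$ it gives $\syz_5^R(R/I)=\syz_3^R\big(R/H^0_\mm(R)\big)\ \oplus\ \syz_3^R\big(H^0_\mm(R)\big)^{\oplus 2}$, so $\syz_3^R(H^0_\mm(R))$ is a direct summand of $\syz_5^R(R/I)$. In either case $\syz_5^R(R/I)$ has a direct summand of the form $\syz_3^R N$ with $N$ a nonzero finite length $R$-module, and by the first paragraph $\lambda(\syz_3^R N)=\infty$; hence $\lambda(\syz_5^R(R/I))=\infty$.

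The homological bookkeeping ($\syz_5=\syz_3\circ\syz_2$, behaviour under direct sums) is routine. The one genuinely delicate point is the decomposition $\syz_2^R(R/I)\cong \big(R/H^0_\mm(R)\big)\oplus H^0_\mm(R)^{\oplus 2}$ in the two-dimensional case: this is exactly where the hypothesis that $R/H^0_\mm(R)$ is Cohen--Macaulay is used, since it is what makes $\bar x_1,\bar x_2$ a regular sequence downstairs --- which in turn controls the colon ideal $(x_2):x_1$ and makes the cross term $R(x_2,-x_1)\cap H^0_\mm(R)^{\oplus 2}$ vanish. (One could also try routing the conclusion through Theorem~\ref{bad to good}, but the finiteness hypothesis it would require, $\lambda(\syz_2^R H^0_\mm(R))<\infty$, can genuinely fail here, so the direct-summand argument is the cleaner path.)
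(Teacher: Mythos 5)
Your argument is correct, and for part (1) it is essentially the paper's proof: both reduce $\syz_{5}(R/I)$ to the third syzygy of the nonzero finite length module $(0:x_1)$ and then use that in dimension one no such third syzygy can have finite length (the paper quotes \cite[Corollary 5.10]{SHB}; your alternative derivation of that fact from Lemma~\ref{divide} with $i=2$ together with Auslander--Buchsbaum is also valid). For part (2), however, you take a genuinely different route. The paper argues by contradiction through a length computation: assuming $\lambda(\syz_5(R/I))<\infty$, it tensors $0\to R/(x_1)\cap(x_2)\to R/(x_1)\oplus R/(x_2)\to R/I\to 0$ with $H^0_\mm(R)$, uses the Cohen--Macaulayness of $R/H^0_\mm(R)$ and the colon hypotheses to identify $(x_1)\cap(x_2)=(x_1x_2)\cong R/H^0_\mm(R)\cong (x_1)$, and then Lemmas~\ref{add} and~\ref{divide} force $\lambda(\syz_5(R/I))\le 0$. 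You instead compute the second syzygy outright: the kernel of $(x_1\ x_2)\colon R^2\to R$ is $R\cdot(x_2,-x_1)\oplus H^0_\mm(R)^{\oplus 2}\cong R/H^0_\mm(R)\oplus H^0_\mm(R)^{\oplus 2}$, and your listed ingredients do suffice --- $x_iH^0_\mm(R)=0$ and $(0:x_i)=H^0_\mm(R)$ give the containments and the isomorphism $R(x_2,-x_1)\cong R/H^0_\mm(R)$, while regularity of $\bar x_1,\bar x_2$ on the two-dimensional Cohen--Macaulay ring $R/H^0_\mm(R)$ gives $((x_2):x_1)=(x_2)+H^0_\mm(R)$ (hence the spanning statement) and, since $\bar x_2$ is a nonzerodivisor there, the vanishing of the cross term. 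Then $\syz_5(R/I)=\syz_3(\syz_2(R/I))$ has $\syz_3(H^0_\mm(R))^{\oplus 2}$ as a direct summand, and the already-established third-syzygy statement (Theorem~\ref{dimension two}(2), or your Lemma~\ref{divide} argument) applied to the nonzero finite length module $H^0_\mm(R)$ finishes the proof. Comparing the two: your splitting is more structural, makes visible exactly where each hypothesis enters, avoids the $\sigma_i$ sign bookkeeping, and gives the extra information that $\syz_j(R/I)\cong\syz_{j-2}(R/H^0_\mm(R))\oplus\syz_{j-2}(H^0_\mm(R))^{\oplus 2}$ for all $j\ge 2$; the paper's Tor-length computation stays purely numerical and matches the template used elsewhere in the section (e.g.\ Theorem~\ref{bad to good}) without needing an explicit module decomposition. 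Both arguments ultimately rest on the same algebraic input, namely the regular sequence on $R/H^0_\mm(R)$ and the hypotheses $(0:x_i)=H^0_\mm(R)$.
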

\begin{proof}
$(1)$	Let $d=1.$ Since $I\cong R/(0:I),$ by \cite[Corollay 5.10]{SHB}, $\lambda(\syz_{5}R/I)=\lambda(\syz_{3}(0:I))=\infty.$
	
$(2)$ Let $d=2.$ Suppose $\lambda(\syz_{5}R/I)<\infty.$ From the long exact sequence of homology modules induced by the following short exact sequence of $R$-modules tensoring with $\otimes_R H_{\mm}^0(R),$
	$$ 0\longrightarrow R/(x_1)\cap(x_2)\longrightarrow R/(x_1)\oplus R/(x_2)\longrightarrow R/I\longrightarrow 0 ,$$ we get \beqn &&-\lm(\tor_{2}^R(R/I,H_{\mm}^0(R))+\lm(\tor_{1}^R(R/I,H_{\mm}^0(R))\\&\leq& -\lm(\tor_{1}^R(R/(x_1)\cap(x_2),H_{\mm}^0(R)))+\sum\limits_{k=1}^2{\lm(\tor_{1}^R(R/(x_k),H_{\mm}^0(R)))}.\eeqn
	Since $R/H_{\mm}^0(R)$ is Cohen-Macaulay, $\overline{x_1},\overline{x_2}$ forms a regular sequence in $R/H_{\mm}^0(R)$ where $``-"$ denotes the image in $R/H_{\mm}^0(R).$ Let $a\in (x_1)\cap(x_2)=x_2((x_1):(x_2)).$ Therefore $a=x_2r$ for some $r\in ((x_1):(x_2))$ and $\overline r\in ((\overline{x_1}):_{R/H_{\mm}^0(R)}(\overline{x_2}))=(\overline{x_1}).$ Since $x_2H_{\mm}^0(R)=0,$ we have $a=x_2r\in (x_1x_2).$  Now $(0:(x_1x_2))=H_{\mm}^0(R)$ implies $(x_1)\cap(x_2)=(x_1x_2)\cong R/H_{\mm}^0(R)\cong(x_1).$ Therefore by Lemmas \ref{add} and \ref{divide}, we get \beqn &&\lambda(\syz_{5}R/I)\\&=&-\lm(\tor_{2}^R(R/I,H_{\mm}^0(R))+\lm(\tor_{1}^R(R/I,H_{\mm}^0(R))-\lambda(R/I\otimes H_{\mm}^0(R))\\&\leq & -\lm(\tor_{1}^R(R/(x_1x_2),H_{\mm}^0(R)))+\sum\limits_{k=1}^2{\lm(\tor_{1}^R(R/(x_k),H_{\mm}^0(R)))}-\lambda(H_{\mm}^0(R))\\&=&-\lm(\tor_{2}^R(R/(x_1x_2),R/H_{\mm}^0(R)))+\lm(\tor_{2}^R(R/(x_1),R/H_{\mm}^0(R)))\\&+&\lm(\tor_{1}^R(R/(x_2),H_{\mm}^0(R)))-\lambda(H_{\mm}^0(R))=\lm(\tor_{1}^R(R/(x_2),H_{\mm}^0(R)))-\lambda(H_{\mm}^0(R)).\eeqn
	From the long exact sequence of homology modules induced by the following short exact sequence of $R$-modules tensoring with $\otimes_R H_{\mm}^0(R),$ $$0\longrightarrow R/(0:(x_2))\overset{.x_2}\longrightarrow R \longrightarrow R/(x_2)\longrightarrow 0$$ we get $\lm(\tor_{1}^R(R/(x_2),H_{\mm}^0(R)))=\lm(H_{\mm}^0(R)/(0:(x_2))H_{\mm}^0(R)).$ Therefore we get $\lambda(\syz_{5}R/I)\leq 0$ which is a contradiction. 
	\end{proof}
	
	We know of one other situation where we can assert that the higher syzygies (third or higher) of a finite length module are not finite length:
	
\begin{theorem}  Let $(R,\mm)$ be a local ring of dimension one and $M$ an $R$-module of finite length.  Suppose that for some $x \in \mm$ we have
$(0:_R x )= (0:_R \mm)$.  Then, for any even integer $i \ge 2$, if $\syz_{i+1}M\neq 0$ then $\lambda(\syz_{i+1}M)=\infty.$
\end{theorem}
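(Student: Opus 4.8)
The plan is to reduce to the case $\depth R=0$, use Lemma~\ref{divide} to express $\lambda(\syz_{i+1}M)$ as an alternating sum of lengths of $\tor$'s against $H^0_\mm(R)$, exploit the hypothesis $(0:_Rx)=(0:_R\mm)$ to cut $H^0_\mm(R)$ down to $\soc R$ in that sum, and then observe that the resulting quantity is $\le 0$ because $\soc R$ is a $k$-vector space and partial Euler characteristics of a free resolution are nonnegative. First I would dispose of $\depth R\ge 1$: then $\soc R=0$, so $x$ is a nonzerodivisor; if $\pd_RM<\infty$ then $\pd_RM=\depth R\le 1<i+1$ by Auslander--Buchsbaum, so $\syz_{i+1}M=0$ against hypothesis, and if $\pd_RM=\infty$ then $\lambda(\syz_jM)=\infty$ for all $j>0$ by \cite[Lemma 3.4]{A}. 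So assume $\depth R=0$, put $N=H^0_\mm(R)\ne 0$ and $l=\dim_k\soc R$, and suppose for contradiction that $0<\lambda(\syz_{i+1}M)<\infty$. Since $\soc R$ has finite length, $x$ avoids every minimal prime, so $x$ is a parameter. By Lemma~\ref{divide}, $\lambda(\syz_{i+1}M)=\sigma_{i-2}(M,N)$.

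Next I would extract the content of $(0:_Rx)=\soc R$. This equality gives $xR\cong R/\soc R$ and $\ker(x\colon N\to N)=\soc R$; combined with $\tor^R_{\ge 1}(M,R)=0$ and the two short exact sequences $0\to R/\soc R\xrightarrow{x}R\to R/xR\to 0$ and $0\to\soc R\to R\to R/\soc R\to 0$, together with $\soc R\cong k^{\oplus l}$, one gets $\tor^R_j(M,R/xR)\cong\tor^R_{j-2}(M,k)^{\oplus l}$ for all $j\ge 3$ and explicit formulas in degrees $\le 2$; a short bookkeeping computation with these (exactly parallel to Lemma~\ref{add}) yields the identity $\sigma_n(M,R/xR)=l\,\sigma_{n-2}(M,k)$ for every $n$. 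Separately, applying Proposition~\ref{length} to the truncated resolution $0\to\syz_{i+1}M\to G_i\to\cdots\to G_0\to 0$ of $M$ tensored with $R/xR$ (and using $\sum_{j\le i}(-1)^j\beta_j(M)=0$, which holds because all homology has finite length) gives $\lambda(\syz_{i+1}M)=\sigma_i(M,R/xR)+\lambda(xG_i\cap\syz_{i+1}M)$, where $xG_i\cap\syz_{i+1}M=xH^0_\mm(G_i)\cap\syz_{i+1}M$ because $(0:_Rx)=\soc R$ forces $xG_i\cap H^0_\mm(G_i)=xH^0_\mm(G_i)$.

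Combining the two displayed facts, $\lambda(\syz_{i+1}M)=l\,\sigma_{i-2}(M,k)+\lambda(xG_i\cap\syz_{i+1}M)$, and $\sigma_{i-2}(M,k)=-\sum_{j=0}^{i-2}(-1)^{(i-2)-j}\beta_j(M)$ is $-1$ times the $(i-2)$nd partial Euler characteristic of the minimal free resolution of $M$. Localizing the truncated resolution $0\to\syz_{i-1}M\to G_{i-2}\to\cdots\to G_0\to M\to 0$ at a minimal prime $\mathfrak p$ of $R$, where $M_\mathfrak p=0$ since $\dim R=1$, makes it a split exact complex of free $R_\mathfrak p$-modules, so this partial Euler characteristic equals $\rk_{R_\mathfrak p}(\syz_{i-1}M)_\mathfrak p\ge 0$; hence $l\,\sigma_{i-2}(M,k)\le 0$. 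Thus, up to the correction term, $\lambda(\syz_{i+1}M)\le 0$, which would contradict $\syz_{i+1}M\ne 0$.

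The step I expect to be the real obstacle — and the place the hypothesis that $i$ be even enters — is controlling the correction $\lambda(xG_i\cap\syz_{i+1}M)$ (equivalently, showing one may take $\sigma_i(M,R/xR)$ to compute $\lambda(\syz_{i+1}M)$, i.e. $xG_i\cap\syz_{i+1}M=0$, or at least that it does not exceed $l\,\rk_{R_\mathfrak p}(\syz_{i-1}M)_\mathfrak p$). The difficulty is that $x$ is constrained to lie in $\mm$ but cannot in general be taken in a high power of $\mm$ — that would force $H^0_\mm(R)$ itself to be a $k$-vector space — so Proposition~\ref{extended}(2) does not apply verbatim to $R/xR$, and one must analyse the nilpotent action of $x$ on $\syz_{i+1}M\subseteq H^0_\mm(G_i)$ (whose kernel on $G_i$ is exactly $\soc(G_i)$, and which shifts the socle filtration of $H^0_\mm(G_i)$ down by one) directly; I expect the sign of the resulting correction to be favourable precisely when $i$ is even.
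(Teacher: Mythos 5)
Your reductions and the identities you do derive are correct: with $J=(0:_Rx)\cong k^{\oplus l}$ one gets $\tor_j^R(M,R/xR)\cong\tor_{j-2}^R(M,k)^{\oplus l}$ for $j\ge 3$, hence $\sigma_i(M,R/xR)=l\,\sigma_{i-2}(M,k)$, and Proposition~\ref{length} applied to the truncated resolution tensored with $R/xR$ gives $\lambda(\syz_{i+1}M)=\sigma_i(M,R/xR)+\lambda(xG_i\cap\syz_{i+1}M)$, which is the same bookkeeping as the paper's key equality $\lambda(\im(\delta_{i+1}\otimes 1_{R/xR}))=\sigma_i(M,R/xR)$. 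But the argument is not finished, and the gap you flag is real while the route you propose for closing it would fail: since $x$ only lies in $\mm$ (it cannot be pushed into a high power of $\mm$ without forcing $H^0_\mm(R)$ to be a vector space), $x\,\syz_{i+1}M$ is in general a nonzero submodule of $xG_i\cap\syz_{i+1}M$, so there is no hope of making the correction term vanish or of reaching $\lambda(\syz_{i+1}M)\le 0$; moreover the parity of $i$ has nothing to do with the sign of that term (all the cancellations you perform work for odd $i$ as well).

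The missing idea is that one should not try to bound $\lambda(\syz_{i+1}M)$ at all. Using $\sum_{j=0}^i(-1)^j\beta_j(M)=0$, your quantity $l\,\sigma_{i-2}(M,k)$ equals $l(\beta_i(M)-\beta_{i-1}(M))$; since $\lambda(\im(\delta_{i+1}\otimes 1_{R/xR}))\ge 0$ this forces $\beta_i\ge\beta_{i-1}$, while $\lambda(\syz_{i+1}M)<\infty$ forces $\rk(\delta_i)=\beta_i\le\beta_{i-1}$. Hence $\beta_i=\beta_{i-1}$, and localizing at the minimal primes (where the resolution is split exact, and in dimension one there are no other nonmaximal primes) gives $\rk(\syz_{i-1}M)_{\mathfrak p}=0$, so $\syz_{i-1}M$ is again a finite length syzygy, nonzero because otherwise $\pd_RM<\infty$ would force $\syz_{i+1}M=0$. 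This descent by two is exactly where evenness enters: choosing a counterexample with $i$ even and minimal, one drops to $i-2\ge 2$, and the base case $i=2$ is excluded by \cite[Corollary 5.10]{SHB} (no nonzero finite length third syzygy in dimension one). In particular neither your depth-zero reduction nor Lemma~\ref{divide} is needed, and no analysis of $xG_i\cap\syz_{i+1}M$ is required.
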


\begin{proof}
If the statement fails, choose an example with the smallest possible (even) value of $i$. By \cite[Corollary 5.10]{SHB}, $\lambda(\syz_3 M) = \infty$, so $i > 2$.

 \wlg we may assume $i\geq 4.$ Let $(G_\bullet,\delta_\bullet)$ be a minimal free resolution of $M.$	Note that $$(*)\hspace{2mm}0\longrightarrow \syz_{i+1}\overset{\iota}\longrightarrow G_{i}\overset{\delta_{i}}\longrightarrow\cdots\longrightarrow G_1\overset{\delta_1}\longrightarrow G_0\overset{\delta_0}\longrightarrow 0$$ is an acyclic complex of finitely generated $R$-modules. Let $G_j=R^{\beta_j (M)}$ for all $0\leq j\leq i.$ Since $\lambda(M)<\infty$ and $\lambda(\syz_{i+1}(M))<\infty,$ we have $\sum\limits_{j=0}^i(-1)^{j}\beta_j(M)=0.$ 

By hypotheses, for $J = (0:x)R \cong k^t$ (for some integer $t$), we have the exact sequence $0 \to J \to R \overset{x}\to R \to R/xR \to 0$.  Thus for $i > 2$, $\tor_i(M, R/xR) \cong (\tor_{i-2}(M, k))^t \cong k^{t\beta_{i-2}(M)}$.  
We also have  the exact sequences
$$ 0 \to \tor_2(M, R/xR) \to J \otimes M \cong (M/\mm M)^t \to M \to M/JM \to 0
$$
and
$$
0 \to \tor_1(M, R/xR) \to M/JM \to M \to M/xM \to 0
$$
Therefore, using the length computations from the above sequences and noting a lot of cancellation,
\begin{align*}\sigma_i(M, R/xR) &= \sum_{j=3}^i (-1)^{i-j+1}\lambda(k^{t\beta_{j-2}(M)}) +(-1)^{i-2+1} \lambda(\tor_2(M, R/xR)) \\
& \qquad + (-1)^{i-1+1} \lambda(\tor_1(M, R/xR) + (-1)^{i-0+1} \lambda (M/xM) \\
                 & = -t\beta_{i-2}(M) + t\beta_{i-3}(M)+ \cdots (-1)^i t\beta_1(M) +(-1)^{i-2+1} t\beta_0(M)\\
                 &= t(\beta_i (M)- \beta_{i-1}(M)).
 \end{align*}
 Since $\syz_{i+1} M$ has finite length, we have $\beta_i(M) \le \beta_{i-1}(M)$ and hence by Proposition \ref{length}, $$0\leq \lm(\im(\delta_{i+1}\otimes 1_{R/xR}))=\sigma_i(M,R/xR)=t(\beta_i(M) - \beta_{i-1}(M)) \le 0.$$ 
 Therefore $\beta_i(M)= \beta_{i-1}(M)$, which implies $\lambda(\syz_{i-1}M)<\infty$, contradicting our choice of $i$ smallest.
 \end{proof}

\end{document}